\titleformat{\subsection}[runin]{\normalfont\normalsize\bfseries}{\thesubsection}{1em}{}
\newtheorem{theorem}{Theorem}
\newtheorem{lemma}[theorem]{Lemma}
\newtheorem{proposition}[theorem]{Proposition}
\theoremstyle{definition}
\title{Distinguishing closed 4-manifolds by slicing}
\author{Tye Lidman, Lisa Piccirillo}
\newcommand{\mfs}{\mathfrak{s}}
\newcommand{\mft}{\mathfrak{t}}
\newcommand{\cptwo}{\mathbb{C}P^2}
\newcommand{\cptwobar}{\overline{\cptwo}}
\definecolor{lblue}{HTML}{A1BAC2}
\definecolor{ldblue}{HTML}{637D94}
\definecolor{lgreen}{HTML}{AEBC7D}
\definecolor{lred}{HTML}{BB6A5E}
\begin{document}

\maketitle

\begin{abstract}
One approach to produce homeomorphic-but-not-diffeomophic closed 4-manifolds $X,X’$ is to find a knot which is smoothly slice in $X$ but not in $X’$. This approach has never been run successfully. We give the first examples of a pair of closed 4-manifolds with the same integer cohomology ring where the diffeomorphism type is distinguished by this approach. Along the way, we produce the first examples of 4-manifolds with nonvanishing Seiberg-Witten invariants and the same integer cohomology as $\mathbb{C}P^2\#\overline{\mathbb{C}P^2}$ which are not diffeomorphic to $\mathbb{C}P^2\#\overline{\mathbb{C}P^2}$. We also give a simple new construction of a 4-manifold which is homeomorphic-but-not-diffeomorphic to $\mathbb{C}P^2\#5\overline{\mathbb{C}P^2}$. 
%Using an elementary surface bundle over a punctured torus, we produce a few different phenomenona in four-manifold topology.  First, we show that there exist two spin rational homology balls with boundary $S^3$ and $H_1 = \mathbb{Z}/2$ with the property that the figure-eight knot is slice in one but not in the other.  Next, we construct a homology $\cptwo \# \cptwobar$ with non-vanishing Seiberg-Witten invariants.  Finally, we use these ideas to give a simple construction of an exotic $\cptwo \#_5 \cptwobar$.
\end{abstract}

%\section{Introduction}
\vspace{10pt}
One strategy to disprove the smooth four-dimensional Poincar\'e conjecture is to find a homotopy sphere $W$ such that there is a knot that is smoothly slice in one of $W$ or $S^4$, but not in the other. (Throughout, we will say a knot $K$ in $S^3$ is slice in a closed manifold $X$ if it is slice in the 4-manifold obtained from $X$ by removing an open ball). 
%(Gluing this exotic $B^4$ to the standard $B^4$ then gives an exotic $S^4$.)  
This argument, which dates back to Casson, seems difficult to run in practice, even though Rasmussen's $s$-invariant (and its generalizations) could provide the obstruction to slicing in $S^4$ \cite{Rasmussen}. In fact, while it is well known that pairs of homeomorphic-but-not-diffeomorphic 4-manifolds abound, to date no such \textit{exotic} pairs have been distinguished by this slicing argument.\footnote{Closed exotica has however been detected by versions of this argument which add hypotheses on the homology class of the slice disk, see \cite{MMP}.}

To develop tools for distinguishing 4-manifolds by slicing, one might turn to the easier problem of smoothly distinguishing pairs of smooth closed 4-manifolds which perhaps are not homeomorphic, but at least have the same cohomology ring.  To the authors' knowledge, even this has not been done. In this easier setting, the analogue of disproving the Poincare conjecture becomes
%more tractable approximation of this problem, which has gained popularity in recent years, is to 
finding an integer homology sphere $W$ such that there is a knot which is slice in one of $W$ or $S^4$, but not in the other. While we cannot solve this maximally small version of the easier problem, we do perhaps the next best thing.
%we produce a pair of \textit{rational homology} balls which we distinguish by slicing.
%The main result in this note is to solve a subsequent approximation.
\begin{theorem}\label{thm:slice}
There are spin rational homology four-spheres $B$ and $W$ with $H_1= \mathbb{Z}/2$ such that the figure-eight knot is slice in $B$ but not in $W$.
% and with the following properties: 
% \begin{enumerate}
% %$\partial B = \partial W = S^3$; 
% \item $\pi_1 (B) = \mathbb{Z}/2$ %and $H_1(W) = \mathbb{Z}/2$;
% \item the figure-eight knot is slice in $B$ but not in $W$.
% \end{enumerate}
\end{theorem}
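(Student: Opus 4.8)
The plan is to obtain $B$ and $W$ by capping off, along its boundary $S^3_0(4_1)$ (the $0$-surgery), the $0$-trace $X_0(4_1) = B^4 \cup_{4_1} h$, where $h$ is a $2$-handle attached to $B^4$ along the figure-eight knot with framing $0$. The core of $h$ is a smooth disk bounded by $4_1$ which is disjoint from any $4$-manifold glued onto $S^3_0(4_1)$, so $4_1$ is automatically slice in every closed-up manifold $X_0(4_1) \cup_{S^3_0(4_1)} N$. A Mayer--Vietoris and Euler-characteristic computation yields two complementary facts: (i) if $N$ is a spin rational homology $S^1 \times D^3$ with $\partial N = S^3_0(4_1)$ and suitable $H_1$, then $X_0(4_1) \cup_{S^3_0(4_1)} N$ (with matching spin structures on the gluing region) is a spin rational homology $4$-sphere with $H_1 = \mathbb{Z}/2$; (ii) conversely, if $Z$ is a closed spin rational homology $4$-sphere with $H_1 = \mathbb{Z}/2$ in which $4_1$ bounds a smooth disk $D$, then since $b_2(Z) = 0$ every closed surface in $Z$ has self-intersection $0$, so $D$ carries the $0$-framing, and cutting out $D$ exhibits $Z = X_0(4_1) \cup_{S^3_0(4_1)} N$ with $N := Z \setminus (\nu(D) \cup B^4)$ a rational homology $S^1 \times D^3$. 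Thus the theorem reduces to (a) exhibiting a single spin rational homology $S^1 \times D^3$ cap $N_B$ of $S^3_0(4_1)$ (with the right $H_1$), which produces $B$; and (b) constructing a spin rational homology $4$-sphere $W$ with $H_1 = \mathbb{Z}/2$ that does not decompose as $X_0(4_1) \cup_{S^3_0(4_1)} N$ for any such cap --- equivalently, one in which $4_1$ is not slice.

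For (a) I would use that the figure-eight knot is strongly negatively amphichiral and therefore rationally slice: by Kawauchi's construction, exploiting the equivariant structure of the symmetry, $4_1$ bounds a smooth disk $D$ in a spin rational homology $4$-ball $G$ with $H_1(G) = \mathbb{Z}/2$. Setting $N_B := G \setminus \nu(D)$, facts (i)--(ii) give that $B := X_0(4_1) \cup_{S^3_0(4_1)} N_B$ is a spin rational homology $4$-sphere with $H_1 = \mathbb{Z}/2$ in which $4_1$ is slice (equivalently, $B = G \cup_{S^3} B^4$, with the Kawauchi disk pushed off the new ball). Checking that $G$, hence $B$, is spin with $H_1 = \mathbb{Z}/2$ is a bookkeeping exercise in the explicit handle diagram of $G$.

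The content of the theorem is (b). I would take a closed $4$-manifold $M$ with $b_+(M) \geq 1$ and nonvanishing Seiberg--Witten invariant for some $\mathrm{spin}^c$ structure $\mfs$, equipped with a free involution such that $W := M/(\mathbb{Z}/2)$ is a spin rational homology $4$-sphere with $H_1(W) = \mathbb{Z}/2$ --- so $\pi_1(W)$ has abelianization $\mathbb{Z}/2$, $M$ is the associated double cover, and the involution acts with $H_i(M;\mathbb{Q})^{\mathbb{Z}/2} = 0$ for $i = 1, 2$. Such $M$ should be assembled by a Fintushel--Stern/Baldridge--Kirk-style construction, the same circle of ideas producing the companion examples that share the integer cohomology ring of $\cptwo \# \cptwobar$. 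Now suppose for contradiction that $4_1$ were slice in $W$. By the reduction, $W = X_0(4_1) \cup_{S^3_0(4_1)} N'$ with $N'$ a rational homology $S^1 \times D^3$. The key point is that the meridian $\mu$ of $4_1$ bounds the cocore of $h$, so the composite $\pi_1(S^3_0(4_1)) \to \pi_1(X_0(4_1)) \to \pi_1(W)$ kills $\mu$; as $\mu$ normally generates $\pi_1(S^3_0(4_1))$ and $\pi_1(X_0(4_1)) = 1$, the double cover $M \to W$ is therefore trivial over $X_0(4_1)$ and over $S^3_0(4_1)$. Hence $M$ decomposes as $X_0(4_1) \cup_{S^3_0(4_1)} \widetilde{N}' \cup_{S^3_0(4_1)} X_0(4_1)$, where $\widetilde{N}'$ is the double cover of $N'$ and the deck involution exchanges the two $0$-traces. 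I would then reach a contradiction from this splitting by a Seiberg--Witten gluing/vanishing argument: the two copies of $X_0(4_1)$ have $b_+ = b_- = 0$, and --- using that $4_1$ is rationally slice, so the relevant monopole Floer data of $S^3_0(4_1)$ and the relative invariant of the $0$-trace $X_0(4_1)$ behave as for the unknot $U$ (with $S^3_0(U) = S^1 \times S^2$) --- the gluing formula along the two copies of $S^3_0(4_1)$ forces $\mathrm{SW}_M$ to vanish in every $\mathrm{spin}^c$ structure that arises from this cut-and-paste, contradicting $\mathrm{SW}_M(\mfs) \neq 0$. Hence $4_1$ is not slice in $W$.

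The main obstacle is the last step: making rigorous --- with full care for $\mathrm{spin}^c$ structures, reducibles, and the $b_1 = 1$ wall $S^3_0(4_1)$ --- the claim that a closed $4$-manifold splitting as $X_0(4_1) \cup_{S^3_0(4_1)} \widetilde{N}' \cup_{S^3_0(4_1)} X_0(4_1)$, with $X_0(4_1)$ the $0$-trace of a rationally slice knot, has vanishing Seiberg--Witten invariants in all relevant $\mathrm{spin}^c$ structures. Everything else --- the homological reduction, the construction of $B$, and the existence of a suitable $M$ --- is either formal or a matter of adapting known gauge-theoretic constructions. A secondary point demanding care is to control $\pi_1(W)$, or at least the involution's action on the Floer and Seiberg--Witten data of $M$, precisely enough to guarantee that the $\mathrm{spin}^c$ structure $\mfs$ witnessing $\mathrm{SW}_M(\mfs) \neq 0$ lies among those to which the vanishing applies.
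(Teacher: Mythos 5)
Your construction of $B$ and your homological reduction (any slice disk in a rational homology $4$-sphere has framing $0$, so sliceness of $4_1$ in $W$ forces an embedding $W = X_0(4_1) \cup_{S^3_0(4_1)} N$ with $N$ a rational homology $S^1\times D^3$) are correct and consistent with what the paper does for $B$. But the two steps that constitute the actual content of the theorem are missing. First, you never construct $W$: you posit a closed $M$ with nonvanishing Seiberg--Witten invariant carrying a free involution whose quotient is a spin rational homology sphere with $H_1=\mathbb{Z}/2$, and defer this to ``Fintushel--Stern/Baldridge--Kirk-style'' methods. Producing such a pair is a substantial part of the paper: one builds the homology $S^2\times D^2$ piece $V$ by Luttinger surgeries on a genus-$2$ bundle over a punctured torus with boundary $S^3_0(Q)$ ($Q$ the square knot), verifies its algebraic topology, and exhibits the free orientation-reversing bundle involution $\sigma$ on $S^3_0(Q)$ so that $W=V/\sigma$ and the double cover $Z=V\cup_\sigma V$ is symplectic. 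None of this is routine bookkeeping.

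Second, and more seriously, your non-sliceness argument rests entirely on the claim that a splitting $M = X_0(4_1)\cup_{S^3_0(4_1)}\widetilde{N}'\cup_{S^3_0(4_1)} X_0(4_1)$ forces $\mathrm{SW}_M$ to vanish, which you yourself flag as unresolved. This is not a fixable technicality: no such formal vanishing along $S^3_0(4_1)$ is available (both trace pieces have $b^+=0$, the wall has $b_1=1$, and in your setting $M$ may well have $b^+=1$, so even ``nonvanishing SW'' needs a chamber or line choice). Worse, the statement is false at the claimed level of generality: an embedded $X_0(4_1)$ only supplies a square-zero torus (capped Seifert surface plus disk), and homologically essential square-zero tori coexist happily with nonvanishing Seiberg--Witten invariants (elliptic fibers in $E(n)$, for instance), while a nullhomologous torus gives no homological leverage at all. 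The paper's proof instead splits into exactly these two cases and treats them with different tools: for a \emph{nullhomologous} slice disk, the Arf invariant of $4_1$ obstructs sliceness in the spin manifold $W$ (Klug's theorem); for a \emph{homologically essential} disk, the resulting torus lifts to a nontrivial square-zero class in the double cover $Z$, and a minimal-genus computation of Stipsicz--Szab\'o for Luttinger surgeries on a genus-$2$ bundle over a genus-$2$ surface (requiring a fiber and section disjoint from the surgery tori, furnished by Lemma~\ref{lem:section}) shows no such class is represented by a torus. Your proposal contains neither the Arf step nor any substitute for the genus-function argument, so the heart of the theorem remains unproved.
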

%The analogy is as follows: 
Our $B$ is the simplest closed smooth 4-manifold with $H_1=\pi_1=\mathbb{Z}/2$, and our $W$ has the same cohomology ring as $B$. 

%approximates $B^4$. $W$ is a homology $B$, and the figure eight knot is slice in one but not the other.  

In our setting, the knot in question is slice in the less complicated four-manifold. This cannot happen in the maximally small versions of this problem, since any knot which is slice in $S^4$ is automatically slice in any other homology sphere. We also note that the figure-eight knot is not particularly special; the theorem holds for any strongly negatively amphichiral knot with non-trivial Arf invariant and four-ball genus equal to 1.  

The methods of our construction of $W$, described in Section~\ref{sec:constructions}, can also be used to produce other new 4-manifolds with simple cohomology rings. In the late aughts,  \cite{AkhmedovSymplectic, FSP} gave examples of nonstandard spin symplectic 4-manifolds with the cohomology ring of $S^2\times S^2$. Their examples are presumably not simply connected, in particular their homeomorphism type remains unknown. We re-prove this (see Theorem~\ref{thm:S2timesS2} below) and establish a non-spin analogue, which we believe is new.

% simple examples of closed non-standard symplectic spin 4-manifolds with the integer cohomology of $S^2\times S^2$ which are presumably not simply connected.  See Theorem~\ref{thm:S2timesS2} below (and  \cite{AkhmedovSymplectic, FSP} for older constructions).  More precisely, Theorem~\ref{thm:slice} relies on the construction of a certain spin symplectic four-manifold $V$ with $H_*(V) \cong H_*(S^2)$ and whose boundary is $S^3_0(Q)$, where $Q$ denotes the square-knot.  (In other words, $V$ is a homology 0-trace on $Q$.)  This is constructed in Section~\ref{sec:constructions}.  The symplectic cohomology $S^2 \times S^2$ is built by gluing together two copies of $V$ by an orientation-reversing free involution of $S^3_0(Q)$.  Quotienting by this involution and removing a ball gives $W$.  

% By identifying the two copies of $V$ using a suitably different gluing, we will prove the following, which we believe to be new:  

\begin{theorem}\label{thm:1+1}
There exists a nonstandard cohomology $\mathbb{C}P^2 \# \overline{\mathbb{C}P^2}$ with non-vanishing Seiberg-Witten and Heegaard Floer four-manifold invariants.  
\end{theorem}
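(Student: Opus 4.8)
The plan is to construct a symplectic four-manifold $X$ with the integer cohomology ring of $\cptwo\#\cptwobar$ and then separate it from $\cptwo\#\cptwobar$ using gauge theory. The crucial contrast is that $\cptwo\#\cptwobar$, being a connected sum of manifolds that carry positive scalar curvature metrics, carries such a metric itself, so a Weitzenb\"ock argument forces all of its small-perturbation Seiberg--Witten invariants to vanish; its Heegaard Floer four-manifold invariants are likewise all trivial (it is a rational surface). Note that $\cptwo\#\cptwobar$ is itself symplectic, so Taubes's bare non-vanishing theorem will not distinguish the two manifolds — the content is in the refined $b_2^+=1$ small-perturbation invariants. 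Thus any $X$ with the cohomology ring of $\cptwo\#\cptwobar$ and a non-vanishing small-perturbation Seiberg--Witten (or Heegaard Floer) four-manifold invariant cannot be diffeomorphic to $\cptwo\#\cptwobar$, hence is the desired nonstandard cohomology $\cptwo\#\cptwobar$.

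To build $X$ I would run the construction of Section~\ref{sec:constructions} on a symplectic building block $M$ with $\chi(M)=4$ and $\sigma(M)=0$, for instance $M=\Sigma_2\times\Sigma_2$ (which has $b_1=8$ and $b_2=18$), and then perform a sequence of Luttinger surgeries along Lagrangian tori, each arranged to kill one generator of $H_1$. Since Luttinger surgery preserves $\chi$ and $\sigma$ while keeping the manifold symplectic, after eight such surgeries one reaches a symplectic $X$ with $\chi=4$, $\sigma=0$, $b_1=0$, and hence $b_2=2$ and $b_2^+=b_2^-=1$. Performing all surgeries so as to keep the intersection form even produces a symplectic cohomology $S^2\times S^2$ and is Theorem~\ref{thm:S2timesS2}; inserting one surgery, or a symplectic fiber sum, that flips the parity gives the odd form $\langle 1\rangle\oplus\langle -1\rangle$, and together with $H_1=0$ this determines $H^*(X;\mathbb{Z})$ to be the cohomology ring of $\cptwo\#\cptwobar$. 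The place where most of the work lies is the van Kampen computation of $\pi_1(X)$: one must check that the chosen surgeries really do kill $H_1$ while leaving $\pi_1(X)\neq 1$ (for instance by exhibiting a nontrivial finite quotient). Nontriviality of $\pi_1$, given $b_1=0$, also shows $X$ is neither rational nor ruled, which is what we will need.

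Granting all this, non-vanishing of the Seiberg--Witten invariant of $X$ follows from Taubes's theorem together with the structure theory of symplectic four-manifolds with $b_2^+=1$ (Li--Liu): since $X$ is symplectic and, by the $\pi_1$ computation, neither rational nor ruled, its canonical class — or, via the blow-up formula, that of its minimal model — is a small-perturbation Seiberg--Witten basic class; the fact that such invariants can vanish, as they do for $\cptwo\#\cptwobar$, is exactly what being non-rational and non-ruled rules out. Alternatively one can propagate the non-vanishing directly through the torus-surgery formula for Seiberg--Witten invariants, starting from the known invariants of $\Sigma_2\times\Sigma_2$. The Heegaard Floer statement is the parallel theorem of Ozsv\'ath--Szab\'o that a symplectic four-manifold has a non-vanishing Heegaard Floer four-manifold invariant, again in the appropriate small-perturbation sense for $b_2^+=1$. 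I expect the two main obstacles to be (1) the $\pi_1$/$H_1$ bookkeeping — arranging the surgeries so that they simultaneously kill $H_1$, produce the odd intersection form, and leave $\pi_1$ nontrivial — and (2) handling the $b_2^+=1$ chamber subtleties in Taubes's and Ozsv\'ath--Szab\'o's theorems, which is precisely why the $\pi_1\neq 1$ property (equivalently, non-rational and non-ruled) is essential rather than cosmetic.
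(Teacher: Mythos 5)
Your distinguishing mechanism (a PSC metric on $\cptwo \# \cptwobar$ forces its small-perturbation invariants to vanish, while Taubes plus the Li--Liu/Liu structure theory gives non-vanishing for a symplectic manifold with $b_1=0$, $\pi_1 \neq 1$, hence neither rational nor ruled) is the standard Akhmedov--Park-style argument and would do the job \emph{if you had the manifold}. The genuine gap is that you never construct it: the step ``inserting one surgery, or a symplectic fiber sum, that flips the parity'' is exactly the hard point and is left unjustified. The reverse-engineering of $\Sigma_2 \times \Sigma_2$ by Luttinger surgeries that you invoke is precisely what produces the known \emph{spin} cohomology $S^2 \times S^2$'s (Akhmedov, Fintushel--Park--Stern, and Theorem~\ref{thm:S2timesS2} here); whether a Luttinger surgery changes the parity of the intersection form is governed by framing data on the surgered torus that you do not control or compute, and a symplectic fiber sum along a square-zero torus must simultaneously preserve $\chi=4$ and $\sigma=0$ and not reintroduce homology, which you have not arranged. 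Tellingly, the paper does not obtain a symplectic example at all: it flips the parity only after leaving the symplectic category, by cutting $Z = V \cup_\sigma V$ along $S^3_0(Q)$ and regluing by $f \circ \sigma$, where $f$ is the Gluck-twist diffeomorphism of Figure~\ref{fig:spinswap} changing the framing parity of the meridian; the non-vanishing of the invariants of the reglued manifold is then preserved not by Taubes' theorem but by the relative-invariant gluing formula (Lemmas~\ref{lem:non-vanishing} and~\ref{lem:gluing-independent}), using $HF_{red}(S^3_0(Q), \mfs_\pm) \cong \mathbb{F}$. So your plan demands strictly more than the theorem needs (a symplectic nonstandard cohomology $\cptwo \# \cptwobar$), and the existence of such a manifold is exactly what your sketch leaves unproved.

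Two further points to be aware of even granting the construction. First, your Seiberg--Witten comparison is essentially fine once you note that the canonical spin$^c$ structure has $c_1^2 = 2\chi + 3\sigma = 8 > 0$, which removes the $b^+=1$ chamber ambiguity for the small-perturbation invariant; but your Heegaard Floer claim needs more care, since the $b^+=1$ mixed invariant $\Phi_{M,L,\mft}$ depends on the choice of square-zero line $L$. The paper handles this by showing that in the standard $\cptwo \# \cptwobar$ (indeed in $\cptwo \# \cptwobar \# D$ for any homology sphere $D$) every square-zero line is carried by a separating $S^2 \times S^1$, where $HF_{red}$ vanishes in non-torsion spin$^c$ structures, so \emph{all} choices of line give zero; your write-up does not address this ambiguity on the Heegaard Floer side. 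Second, the paper's argument yields the slightly stronger conclusion that its example is not diffeomorphic to $\cptwo \# \cptwobar \# D$ for any homology four-sphere $D$, which the PSC argument as you state it does not immediately give.
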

%Our philosophy is that given any construction in four-manifolds, it should be used towards 

We can also apply our methods to produce honest exotic pairs. Combining the constructions of this paper with a construction from earlier work with Levine \cite[Section 6]{LLP}
%, we constructed an exotic 4-fold blow-up of the 0-trace on $Q$.\footnote{Unfortunately, this exotic 4-fold blow-up of the 0-trace on $Q$ is denoted $V$ in \cite{LLP}.} Gluing this to $V$, 
we obtain medium-sized exotica. 

\begin{theorem}\label{thm:1+5}
There exists a 4-manifold which is homeomorphic-but-not-diffeomorphic to $\cptwo \#_5 \cptwobar$.
% Let $X_0(Q)$ denote the 0-trace on the square knot.  Gluing the four-manifold $V$ from Section~\ref{sec:constructions} to the exotic $X_0(Q) \#_5 \cptwobar$ constructed in \cite[Section 6]{LLP} produces an exotic $\cptwo \#_5 \cptwobar$.
\end{theorem}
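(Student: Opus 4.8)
\emph{Proof idea.} The plan is to combine the surgery construction of Section~\ref{sec:constructions} — the one carrying $B$ to $W$ — with the simply-connected model produced in \cite[Section~6]{LLP}. Concretely, I would first take from \cite[Section~6]{LLP} a closed, simply-connected smooth $4$-manifold $M$ with $(b_2^+, b_2^-) = (1,5)$ that carries an embedded square-zero torus $T$ along which the construction of Section~\ref{sec:constructions} can be run, and whose complement is arranged (using the properties of $M \setminus \nu(T)$ established there) so that the surgery preserves simple-connectivity and so that the Seiberg--Witten, equivalently Heegaard Floer, input developed for Theorem~\ref{thm:1+1} governs the resulting relative invariant. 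The intersection form of $M$ is then the odd indefinite form $\langle 1\rangle \oplus 5\langle -1\rangle$, and since every smooth $4$-manifold has trivial Kirby--Siebenmann invariant, Freedman's classification already identifies $M$ topologically with $\cptwo \#_5 \cptwobar$.

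Next I would run the construction of Section~\ref{sec:constructions} on the pair $(M, T)$ to produce a closed $4$-manifold $M'$. Because the operation is supported in a neighborhood of $T$ and is homologically trivial, $M'$ is again closed, simply connected, with intersection form $\langle 1\rangle \oplus 5\langle -1\rangle$, so Freedman's theorem again gives that $M'$ is homeomorphic to $\cptwo \#_5 \cptwobar$. To separate $M'$ from $\cptwo \#_5 \cptwobar$ smoothly, I would reuse the gauge-theoretic package from the proof of Theorem~\ref{thm:1+1}: the construction alters the small-perturbation Seiberg--Witten function of $M'$ — in the chamber singled out by $T$ — by a nonmonomial factor coming from the figure-eight knot, which is incompatible with the Seiberg--Witten invariants of the rational surface $\cptwo \#_5 \cptwobar$. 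Alternatively, $M'$ and $\cptwo \#_5 \cptwobar$ can be told apart by a slicing argument in the spirit of Theorem~\ref{thm:slice}, now with a constraint on the homology class of the disk.

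The hard part will be carrying out the smooth distinction in the $b_2^+ = 1$ regime. The Seiberg--Witten invariants of $\cptwo \#_5 \cptwobar$ are chamber-dependent, so one must identify exactly which chamber the torus $T$ and the surgery select, check via the gluing and product formulas underlying Theorem~\ref{thm:1+1} that the relative invariant attached to $T$ genuinely survives into $M'$, and rule out that the apparent change in the chamber-wise Seiberg--Witten function is merely a wall-crossing artifact. A secondary and more routine point is to confirm that the construction of Section~\ref{sec:constructions} applied to $M$ leaves $\pi_1$ and the intersection form unchanged on the nose — adding nothing to $b_2$ and not flipping its parity — so that Freedman's classification places $M'$ precisely on the homeomorphism type of $\cptwo \#_5 \cptwobar$ and not a neighbor.
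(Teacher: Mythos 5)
Your outline does not match the paper's construction, and as written it is not a well-defined operation. Section~\ref{sec:constructions} does not pass from $B$ to $W$ by a surgery supported near a torus in a closed manifold; it builds a compact piece $V$ with $\partial V = S^3_0(Q)$ (Luttinger surgeries on a genus-2 surface bundle over a punctured torus), and the input from \cite[Section 6]{LLP} is not a closed simply-connected $(1,5)$-manifold carrying a square-zero torus but a compact manifold $D$ with boundary $S^3_0(Q)$ --- a homotopy $X_0(Q)\#_4\cptwobar$ --- equipped with spin$^c$ structures for which the relative invariants $\Psi_{D,\mathfrak{u}_\pm}$ are nonzero. The exotic manifold of Theorem~\ref{thm:1+5} is the closed manifold $A = V \cup_{S^3_0(Q)} D$; simple connectivity uses $\pi_1(D)=0$ together with Lemma~\ref{lem:Valgtop} (the fiber normally generates $\pi_1(V)$), and then $b_2(A)=6$, $\sigma(A)=-4$ and oddness give the homeomorphism to $\cptwo\#_5\cptwobar$ via Freedman. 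Note also that the knot governing this construction is the square knot $Q$, not the figure-eight, and no Fintushel--Stern knot-surgery formula applies, so the proposed ``nonmonomial factor'' change in the Seiberg--Witten function has no basis in this setting.

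The genuine gap is exactly the step you defer as ``the hard part.'' In the $b^+=1$ regime the mixed invariant depends on the choice of square-zero line, so nonvanishing for one line does not by itself contradict being standard, and chamber bookkeeping for Seiberg--Witten invariants will not close the argument. The paper's resolution has two pieces you would need: first, $\Phi_{A,\mathrm{span}\{F\},\mft}\neq 0$ because the relative invariants of $V$ and $D$ pair nontrivially in $HF_{red}(S^3_0(Q),\mfs_\pm)\cong\mathbb{F}$ (Lemmas~\ref{lem:non-vanishing} and~\ref{lem:gluing-independent}); second, and crucially, if $A$ were diffeomorphic to $\cptwo\#_5\cptwobar$, Wall's theorem \cite{Wall} that every automorphism of $H_2$ is realized by a diffeomorphism would allow one to represent the fiber class $[F]$ by an embedded square-zero sphere, and then cutting along the corresponding $S^2\times S^1$, which has $HF_{red}=0$ in nontorsion spin$^c$ structures, forces $\Phi_{A,\mathrm{span}\{F\},\mft}=0$, a contradiction. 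Without this argument (or some equivalent way of killing the line-dependence), your proposal does not establish the smooth distinction, and the alternative ``slicing with a homology-class constraint'' suggestion is likewise left without any mechanism.
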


We note that this homeomorphism type is already well-known to support infinitely many smooth structures (originally \cite{JSS}, see also \cite{FSP} and others). The novelty here is the construction, which the reader may or may not find simpler than others. 

\section*{Organization} In Section~\ref{sec:constructions} we build the key four-dimensional piece, $V$, using Luttinger surgeries on a genus 2 surface bundle over a punctured torus.  We use $V$ to construct the rational homology sphere $W$.  In Section~\ref{sec:sliceness} we establish that the figure-eight knot is not slice in $W$.  In Section~\ref{sec:other}, we prove Theorem~\ref{thm:1+1} and Theorem~\ref{thm:1+5}.

\section*{Acknowledgements} The first author is supported in part by NSF grant DMS-2105469.  He thanks the Department of Mathematics at the University of Texas at Austin for its hospitality.  The second author is supported in part by a Sloan Fellowship, a Clay Fellowship, and the Simons collaboration “New structures in low-dimensional topology”.  We thank Dani Alvarez-Gavela, \.{I}nan\c{c} Baykur, John Etnyre, Adam Levine, Steven Sivek, and Mike Usher for helpful discussions.

\section{Constructions}\label{sec:constructions}
The manifold $B$ is the Kawauchi manifold c.f. \cite{Kawauchi}, which has the following description.  Take the 0-trace on $4_1$ and quotient by the free orientation-reversing involution on the boundary, $S^3_0(4_1)$, coming from the strongly negatively amphichiral symmetry of $4_1$.  
%This gives a closed rational homology sphere $B$. 
(By recent work of Levine \cite{Levine}, $B$ is independent of the strongly negatively amphichiral knot in the construction.) It is straightforward to check that $B$ satisfies the conditions in Theorem~\ref{thm:slice}.  Further, since the 0-trace on $4_1$ embeds in $B$ by construction, we see that $4_1$ is slice in $B$.  (For an earlier construction of a rational homology sphere with $\pi_1 = \mathbb{Z}/2$ where the figure-eight knot is slice, see \cite{FS}.  We have chosen Kawauchi's description since it is closer in nature to the new rational homology sphere we build subsequently.)

The manifold $W$ will be built using a few steps.  Here is an outline.  First, we build a genus 2 surface bundle over a once-punctured torus with boundary $S^3_0(Q)$, where $Q$ is the square knot.  We then perform some Luttinger surgeries to create a symplectic homology $S^2 \times D^2$; this is our key piece $V$.  Quotienting by a free involution on $S^3_0(Q)$ gives $W$.  Now we do this concretely.

\begin{figure}
    \centering
\begin{tikzpicture}
\node[anchor=south west] at (0,0) {\includegraphics[scale=0.2]{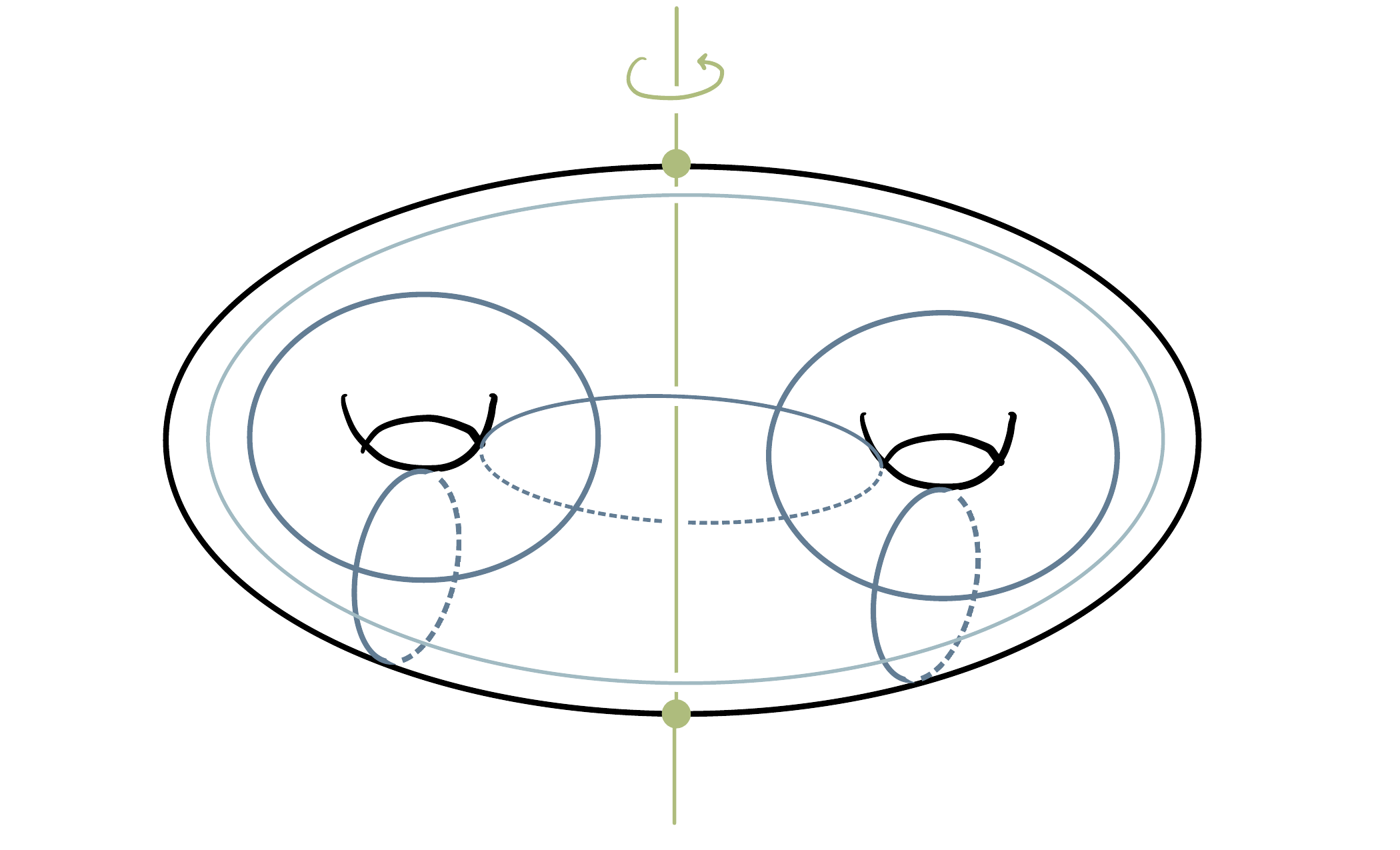}};
\draw[color=ldblue] (2.1,.8) node {$a$};
\draw[color=ldblue] (2.9,3) node {$b$};
\draw[ color=ldblue] (3.8,2.7) node {$c$};
\draw[ color= ldblue] (5.3,3) node {$d$};
\draw[color=ldblue] (5.0,.8) node {$e$};
\draw[color=lgreen] (4,4.4) node {$\phi$};
 \draw (1,1) node {$F$};
 \draw[color=lblue] (2.6,3.8) node {$z$};
 \end{tikzpicture}    
\caption{}
%Abuseively, we will also refer to a postiive Dehn twist on a curve $\ell$ on $S$ by the letter $\ell$, and a negative Dehn twist by $\ell^{-1}$. \textcolor{brown}{This is kind of a weird caption, since we explain this notational abuse in line.}
\label{fig:S}
\end{figure}

Consider a genus 2 surface $F$ equipped with curves $a,b,c,d, e$ configured as in Figure~\ref{fig:S}. It is well-known that 0-surgery on the square knot $Q$ is fibered with fiber $F$ and monodromy conjugate to $abd^{-1}e^{-1}$, where we write a letter to mean a positive Dehn twist along that letter.  Let $\phi$ be the involution on $F$ shown in Figure~\ref{fig:S}, which exchanges $a$ and $e$, $b$ and $d$, and fixes $c$.  Note that $abd^{-1}e^{-1} = (ab)\phi (ab)^{-1} \phi^{-1}$, i.e. $abd^{-1}e^{-1}$ is a commutator in the mapping class group of $F$, and hence $S^3_0(Q)$ bounds a genus 2 fiber bundle with base a once-punctured torus, denoted $R$.  (If $\alpha, \beta$ are a basis for the fundamental group of the punctured torus, then $R$ is specified by having say monodromy $ab$ along $\beta$ and monodromy $\phi$ along $\alpha$.)  Note that $R$ is a symplectic four-manifold with $\chi(R) = 2$ and the canonical class evaluates to $\pm 2$ on a fiber \cite{Thurston}.  %Note that $\chi(R) = 2$ and that the canonical class evaluates on a fiber to be $\pm 2$. 

\begin{figure}
\centering
\begin{tikzpicture}
\node[anchor=south west] at (0,0) {\includegraphics[trim=0 150 0 150, clip, scale=.3]{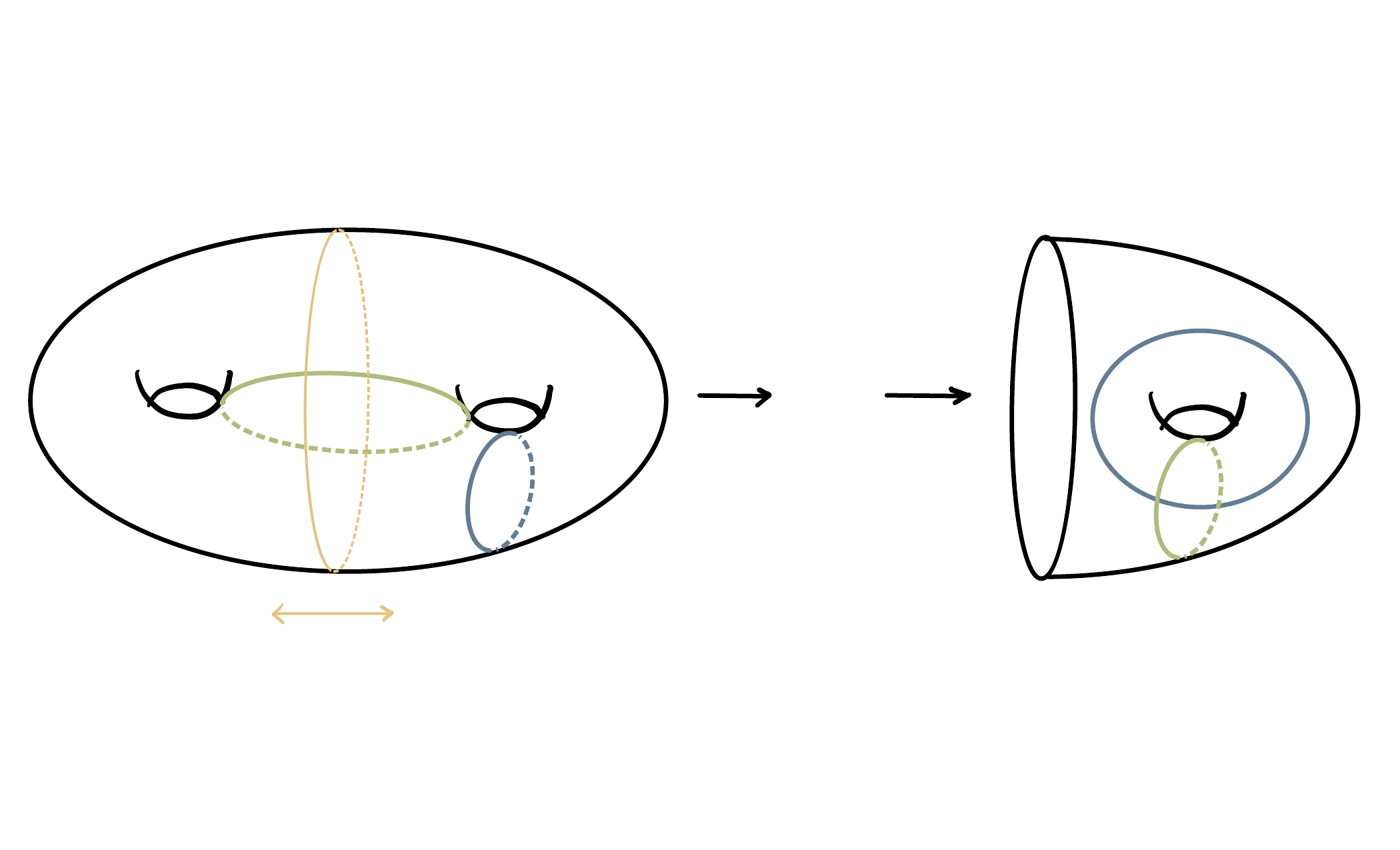}};

% Left diagram
\definecolor{lyellow}{HTML}{E3C480}
\node[text=ldblue] at (3.5,1.1) {$e$};
\node[text=lgreen] at (2.2,1.5) {$c$};
\node[text=lyellow] at (2.6,.1) {$\epsilon$};
\node[] at (.6,1) {$F$};

% Right diagram  
\node[text=ldblue] at (8.5,2.7) {$\beta$};
\node[text=lgreen] at (8.6,1.1) {$\alpha$};
\node[] at (6.4,2) {$R$};
\end{tikzpicture}
\caption{R is a genus 2 surface bundle over a puncured torus. The surgery torus $T_\beta$ is the sub-bundle given by the restriction to the two curves marked in blue, and $T_\alpha$ to the curves in green. The orientation reversing involution $\epsilon$ on $F$ is marked in yellow.}
\label{fig:bundle}
\end{figure}

Since the monodromy $ab$ fixes $e$ pointwise, we have a torus $T_\beta$ in $R$ given by the sub-bundle restricted to $e$ in the fiber and $\beta$ in the base.  Similarly, $\phi$ fixes $c$ setwise and with orientation, and so we have a torus $T_\alpha$ from the restriction to $c$ in the fiber and  $\alpha$ in the base.  Notice that there is an area form on the fiber preserved by both monodromies $ab$ and $\phi$; this induces a symplectic form on $R$ for which $T_\alpha$ and $T_\beta$ are Lagrangian.  We will need a parametrization of these tori as submanifolds of $R$, which we can get in the following way; let $\beta'$ and $\alpha'$ be curves on $T_\beta$ and $T_\alpha$ which project to $\beta$ and $\alpha$. (For curves in the fiber, we will use the inclusion to think of them as curves in $R$, and not give them a new name). Thus $T_\beta$ is parametrized by $(e,\beta')$ and $T_\alpha$ by $(c,\alpha')$. Both $T_\alpha$ and $T_\beta$ come with a Lagrangian framing. Define $\alpha'_\partial$ and $\beta'_\partial$ to be pushoffs of $\alpha'$ and $\beta'$ into the boundaries of a neighborhood of $T_\alpha$ and $T_\beta$ using this framing. We now perform a $+1$-Luttinger surgery along each torus with directions $\beta'$ and $\alpha'$ respectively; in particular, we remove a neighborhood of $T_\beta$ (respectively $T_\alpha$) and reglue in $T^2 \times D^2$ so that $\partial D^2$ goes to $\mu_{T_\beta}+\beta_\partial'$ (respectively $\mu_{T_\alpha}+\alpha_\partial'$).  Call the result\footnote{We note that since we parametrized  the surgery tori somewhat arbitrarily, $V$ is not well-defined per se. Since any such parametrization yields a $V$ for which all claims of the rest of the paper hold, we are content with this ambiguity.} $V$, which necessarily still has $\chi(V) = 2$.  We will argue momentarily that $V$ is a spin integer homology $S^2$; assume that for now, and note that $V$ is symplectic with canonical class evaluating to $\pm 2$ on a copy of $F$ away from where the Luttinger surgeries happened.  

We now define $W$ to be $V/\sigma$, where $\sigma$ is the free, orientation-reversing boundary automorphism described in the following lemma: 

\begin{lemma}\label{lem:gluing-symplectic} 
    There is a free orientation-reversing bundle isomorphism $\sigma$ on $S^3_0(Q).$ 
\end{lemma}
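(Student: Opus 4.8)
The plan is to build $\sigma$ fiberwise from an orientation-reversing involution of the genus two fiber $F$. Replacing the monodromy by a conjugate (which does not change the resulting mapping torus), regard $S^3_0(Q)$ as $F\times[0,1]/(x,1)\sim(\psi(x),0)$ with $\psi=abd^{-1}e^{-1}$, and recall that $a,b$ are supported in one once-punctured-torus half $P_1$ of $F$, that $d,e$ are supported in the other half $P_2$, and that $c=\partial P_1=\partial P_2$ is the separating curve. In particular $ab$ and $d^{-1}e^{-1}$ have disjoint supports, hence commute.

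First I would produce a fixed-point-free orientation-reversing involution $\epsilon$ of $F$ that interchanges $P_1$ and $P_2$ and carries the unoriented curves $a$ and $b$ to $d$ and $e$ respectively (and hence $c$ to $c$). Such an $\epsilon$ is assembled from an orientation-reversing diffeomorphism $f\colon P_1\to P_2$ with $f(a)=d$, $f(b)=e$, and $f$ restricting to a free involution of $c$, by declaring $\epsilon$ to equal $f$ on $P_1$ and $f^{-1}$ on $P_2$; then $\epsilon$ is a genuine involution, it reverses orientation, and it is free because it exchanges the two disjoint interiors and acts freely on $c$. The existence of such an $f$ is where one uses that $Q$ is a connected sum of a knot with its mirror, so that $P_1$ and $P_2$ are interchanged by an orientation-reversing diffeomorphism matching up the distinguished curves (after reorienting them, which is harmless for Dehn twists). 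One then checks $\epsilon\psi\epsilon^{-1}=\psi$: since $\epsilon$ reverses orientation it conjugates the positive Dehn twist along a curve $\gamma$ to the negative twist along $\epsilon(\gamma)$, so $\epsilon(abd^{-1}e^{-1})\epsilon^{-1}=d^{-1}e^{-1}ab$, and this equals $abd^{-1}e^{-1}=\psi$ by the disjoint-support commutation; arranging that $\epsilon$ carries the support of each twist onto that of the corresponding twist upgrades this to an identity of diffeomorphisms, not merely of isotopy classes.

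Finally I would set $\sigma([x,t]):=[\epsilon(x),t]$. The commutation $\epsilon\psi=\psi\epsilon$ makes this well defined on $S^3_0(Q)$; it covers the identity of the base circle and acts on each fiber by $\epsilon$, so it is a bundle isomorphism; it is an involution since $\epsilon^2=\mathrm{id}$; it reverses orientation since $\epsilon$ does and the base is fixed; and it is free since $\epsilon$ has no fixed points. I expect the construction of $\epsilon$ to be the only delicate point: one must simultaneously make it an honest involution, orientation-reversing, fixed-point free, and a commuter with $\psi$, and in particular it must realize the ``parallel'' pairing $a\leftrightarrow d$, $b\leftrightarrow e$ rather than the ``crossed'' pairing realized by $\phi$ --- the resulting constraint on curve orientations being exactly why $f$ must be permitted to reverse orientation. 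It is also worth recording why the apparently more natural route fails: a bundle map covering the orientation-reversing flip of the base and built from $\phi$ (which does satisfy $\phi\psi\phi^{-1}=\psi^{-1}$) is orientation-reversing only if it preserves orientation on each fiber, and an orientation-preserving involution of $F$ cannot be fixed-point free, as the quotient would be a closed surface of Euler characteristic $\chi(F)/2=-1$.
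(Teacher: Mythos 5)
Your construction is correct, but it takes a genuinely different route from the paper's. The paper does not build a fiberwise map covering the identity of the base: it first conjugates the monodromy to $abe^{-1}d^{-1}$, places the two commuting factors $ab$ and $e^{-1}d^{-1}$ at antipodal points of the base circle, and defines $\sigma$ to be rotation by $\pi$ in the base combined with an orientation-reversing reflection $\epsilon$ of the fiber interchanging the two handles (so that $\epsilon(ab)\epsilon^{-1}=e^{-1}d^{-1}$). There freeness is automatic because the base rotation has no fixed points, so the fiber reflection is allowed to have them, and orientation-reversal comes from the fiber reflection alone. You instead make $\sigma$ cover the identity of the base, which forces the fiber involution itself to be free and to commute with the monodromy on the nose; you achieve this with the ``parallel'' pairing $a\leftrightarrow d$, $b\leftrightarrow e$ plus the disjoint-support commutation of $ab$ with $d^{-1}e^{-1}$, and your sign computation (orientation-reversing conjugation turning positive twists into negative ones) is right. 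The one delicate point, the existence of a free orientation-reversing involution of the genus-$2$ surface exchanging the two once-punctured tori, realizing the parallel pairing, and restricting to a free rotation of the separating curve, does hold: it is exactly the deck transformation of the orientation double cover of $T^2\#\mathbb{R}P^2$, and the needed freeness on the separating circle is possible precisely because the restriction there is orientation-preserving on that circle (an orientation-reversing involution of $S^1$ would have fixed points) --- a check worth making explicit. Two caveats: your ``$c$'' is a separating curve, not the paper's $c$ (a chain curve fixed by $\phi$, used later for a Luttinger torus), so the naming clash should be removed; and the later arguments (Lemma~\ref{lem:section}, Lemma~\ref{lem:Wproperties}, Theorem~\ref{thm:S2timesS2}, and the proof of Theorem~\ref{thm:slice}) use more about $\sigma$ than Lemma~\ref{lem:gluing-symplectic} records, e.g.\ how it acts on the boundaries of the sections over the fixed points of $\phi$ and how it interacts with the fibration when forming $R\cup_\sigma R$, so substituting your $\sigma$ would require re-verifying those properties, which the paper's explicit rotation-plus-reflection model makes transparent. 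Finally, your closing remark slightly mischaracterizes the alternative: the paper rotates the base rather than flipping it, precisely so that it never needs a free involution of the fiber.
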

\begin{proof}
    To begin, notice that while we have described $S^3_0(Q)$ as a genus 2 surface bundle over $S^1$ with monodromy $abd^{-1}e^{-1}$, there is a bundle isomorphism to a genus 2 surface bundle over $S^1$ with monodromy $abe^{-1}d^{-1}$, where the isomorphism comes from conjugating the monodromy by $e^{-1}$. We will work with this latter description. 

    Now think of the composed mapping torus for the composition $(ab)\circ (e^{-1}d^{-1})$. If we think of the monodromy factors occuring respectively at $\pm 1$ in the $S^1$ base, then we obtain an orientation reversing bundle isomorphism $\sigma$ on $S^3_0(Q)$ by rotation by $\pi$ in the base and $\epsilon$ reflection in the fiber, where $\epsilon$ is described in Figure \ref{fig:bundle}. 
\end{proof}
%note that $S^3_0(Q)$ admits a free orientation-reversing involution $\sigma$ obtained by applying the reflection $\epsilon$ in Figure~\ref{fig:bundle} to the fibers and the intipodal map to the base.  
%See e.g. \cite[Proposition 3.23]{LLP} for a more knot-theoretic description of this free involution on $S^3_0(Q)$. 

%The manifold $W$ is obtained by quotienting $V$ by $\sigma$ on the boundary and removing a ball.       
In order to show that $W$ has the desired properties from Theorem~\ref{thm:slice}, we will need to analyze $V$ more carefully.  
\begin{lemma}
    The manifold $V$ constructed above is a spin 4-manifold with $H_*(V)\cong H_*(S^2).$ Further, $\pi_1(V)$ is normally generated by $\pi_1(F)$, where $F$ is a fiber of $R$ away from the Luttinger surgeries. 
\end{lemma}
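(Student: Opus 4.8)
The plan is to compute $H_*(V)$ directly from the surgery description, using the fact that $R$ is a genus 2 surface bundle over a once-punctured torus with known monodromies, and that $V$ is obtained from $R$ by two Luttinger surgeries.

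First I would compute $H_*(R)$. Since $R$ fibers over the once-punctured torus $\Sigma_{1,1}$ (a homotopy wedge of two circles) with fiber $F$ of genus $2$, I would use the Wang-type sequence / the Serre spectral sequence for the fibration, or more concretely present $\pi_1(R)$ as an HNN-type extension and abelianize. The monodromies are $ab$ along $\beta$ and $\phi$ along $\alpha$; one computes the action of these on $H_1(F)=\mathbb{Z}^4$ and takes the appropriate coinvariants/invariants. I expect $R$ to have $H_1(R)\cong \mathbb{Z}^2\oplus(\text{something})$ with the $\mathbb{Z}^2$ coming from the base, and $\partial R = S^3_0(Q)$ has $H_1=\mathbb{Z}$, $H_2=\mathbb{Z}$, which must be consistent. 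Then I would analyze the effect of the two $+1$-Luttinger surgeries on homology. Each Luttinger surgery along a Lagrangian torus, when done with a $+1$ coefficient in a direction that is a nontrivial loop in the base, has the effect of killing the corresponding base generator in $\pi_1$ (up to conjugacy and the meridian relation); on homology it kills the class $[\beta]$ (resp. $[\alpha]$) at the cost of also affecting $H_2$. After both surgeries the base-directions $\alpha,\beta$ are killed, so $\pi_1(V)$ should be normally generated by the image of $\pi_1(F)$, which is the second claim; and $H_1(V)$ should become trivial while $H_2(V)\cong\mathbb{Z}$, generated by $[F]$ (or a related class), giving $H_*(V)\cong H_*(S^2)$. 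I would double-check $H_2$ by a Mayer-Vietoris computation for $V = (R\setminus(\nu T_\alpha\cup\nu T_\beta))\cup (T^2\times D^2)^{\sqcup 2}$, keeping careful track of how $\mu_{T}+\gamma'_\partial$ bounds.

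For the spin statement, once $H_1(V)=0$ and $H_2(V)=\mathbb{Z}$, I would show the generator of $H_2(V;\mathbb{Z}/2)$ has self-intersection $0$ (it is represented by $F$, which has trivial normal bundle as a fiber) and is characteristic only if $w_2=0$ vanishes on it; more robustly, $V$ is symplectic with $b_1=0$ and one checks the canonical class $K$ is divisible appropriately—actually the cleanest route is: $V$ is obtained from the symplectic bundle $R$ by Luttinger surgery, Luttinger surgery preserves the property of being spin when done along a Lagrangian torus in a spin manifold provided the framing is chosen compatibly, and $R$ is spin because its intersection form vanishes (it deformation retracts to something with no even... ) — I would instead verify $w_2(V)=0$ by noting $H^2(V;\mathbb{Z}/2)=\mathbb{Z}/2$ is generated by the Poincaré dual of $F$, and $\langle w_2(V),[F]\rangle = [F]\cdot[F] = 0 \pmod 2$ since $F$ is an embedded surface of square zero and even genus, hence $w_2$ evaluates to zero on all of $H_2(V;\mathbb{Z}/2)$, so $V$ is spin.

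The main obstacle I anticipate is bookkeeping in the homology computation of the Luttinger surgeries: one must correctly identify the Lagrangian framing pushoffs $\alpha'_\partial,\beta'_\partial$ as homology classes in the boundary $3$-tori and verify that, after regluing $T^2\times D^2$ with $\partial D^2 \mapsto \mu_T + \gamma'_\partial$, the new $2$-cycles and relations come out exactly right to collapse $H_1$ to $0$ without introducing extra $H_2$. A clean way to organize this is to work at the level of $\pi_1$ first — present $\pi_1(R)$ via generators from $\pi_1(F)$ together with $\alpha,\beta$ and the bundle relations, observe each $+1$-Luttinger surgery adds a relation of the form $\beta = [\text{meridian-type element}]$ (resp. for $\alpha$) expressing the base generator in terms of fiber elements and a commutator — so that $\pi_1(V)$ is generated by $\pi_1(F)$, proving the ``normally generated'' claim — and then abelianize to read off $H_1(V)=0$, finally getting $H_2(V)=\mathbb{Z}$ from $\chi(V)=2$, $b_0=1$, $b_1=0$, $b_3=0$ (the latter since $H_1(\partial V)\to H_1(V)$ and duality force $b_3=b_1=0$), so $b_2=1$ and there is no torsion by a Mayer-Vietoris check.
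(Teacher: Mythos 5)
Your overall skeleton is the same as the paper's (compute $H_1(V)$ from the surgery description, get $H_2,H_3$ from $\chi(V)=2$, duality and universal coefficients, then deduce spin from $[F]\cdot[F]=0$; the final $w_2$ argument you settle on is essentially the paper's). But there is a genuine gap at the one step that actually carries the proof: you assert that each $+1$-Luttinger surgery ``has the effect of killing the corresponding base generator,'' and that the surgery relation expresses $\beta$ (resp.\ $\alpha$) ``in terms of fiber elements and a commutator.'' Neither statement is automatic, and nothing in your outline supplies a reason for it. The surgery relation only says $\mu_{T_\beta}\cdot\beta'_\partial=1$ (resp.\ $\mu_{T_\alpha}\cdot\alpha'_\partial=1$) in $\pi_1$ of the filled manifold; by itself this merely identifies the base direction with a meridian. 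To conclude $H_1(V)=0$ you must know that $\mu_{T_\alpha}$ and $\mu_{T_\beta}$ are already trivial in $H_1(R\setminus(T_\alpha\cup T_\beta))$, and to conclude that $\pi_1(V)$ is normally generated by $\pi_1(F)$ you need the stronger statement that the meridians die in $\pi_1(V)/\langle\langle\pi_1(F)\rangle\rangle$. For a general torus surgery this is false: if the meridian survived in homology of the complement, your relations would leave $H_1(V)$ nonzero. What makes it true here is a specific geometric input that your proposal never identifies: the surgery tori admit geometrically dual tori, $T_\alpha^*=(b,\beta'')$ and $T_\beta^*=(z,\alpha'')$ (with $z$ the extra curve in Figure~\ref{fig:S}), each meeting its partner transversally in one point. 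This is what shows the meridians are freely homotopic in the complement to the commutators $[b,\beta'']$ and $[z,\alpha'']$; since $b$ and $z$ are fiber curves, the meridians are nullhomologous in the complement and vanish modulo $\langle\langle\pi_1(F)\rangle\rangle$, and only then do the filling relations kill $\alpha'_\partial$ and $\beta'_\partial$. Your ``bookkeeping'' worry is exactly this point, but acknowledging it is not the same as resolving it; without exhibiting dual classes (or an equivalent argument) the computation does not close.

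Two smaller remarks. First, you also need the fact that $H_1(F)$ dies in $H_1(R)$ before you can say $H_1$ of the complement is generated by the base directions and meridians alone; your plan of computing coinvariants of the monodromy action would work but is left undone, whereas the paper gets it immediately from the vanishing of fiber classes in $H_1(S^3_0(Q))$. Second, in the spin paragraph you float the claim that ``$R$ is spin because its intersection form vanishes'' before abandoning it---discard that line of thought entirely; the correct argument, which you do eventually state, is that $H_1(V)=0$ makes evaluation on $H_2(V;\mathbb{Z}/2)$ faithful for $w_2$, and $\langle w_2(V),[F]\rangle\equiv [F]\cdot[F]\equiv 0$, with $[F]$ generating $H_2(V;\mathbb{Z}/2)$.
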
\label{lem:Valgtop}

\begin{proof}
We begin with the spin and homology assertions. We will prove that $H_1(V)=0$. Since $\partial V$ is connected, it then is routine to check (using Poincare-Lefshetz duality and universal coefficients) that $H_3(V)=0$ and $H_2(V)$ is free. The fact that $\chi(V)=2$ then implies that $H_2(V)=\mathbb{Z}$, which one can argue has to be generated by the fiber $F$. Since the fiber has self intersection zero, $V$ is spin. 

To compute $H_1(V)$ we first compute $H_1(R\setminus (T_\alpha \cup T_\beta))$. We will commit the standard abuse of referring to a homology class by a curve representing it. By a standard argument, we know that $H_1(R\setminus (T_\alpha \cup T_\beta))$ is generated by $H_1(R)$ and $\{\mu_{T_\alpha}, \mu_{T_\beta}\}$. We know that $H_1(R)$ is generated by the $H_1$ of a fiber and $\{\alpha', \beta'\}$, curves which project to generate $H_1$ of the base. Since $H_1(F)$ dies in $H_1(S^3_0(Q))$, and hence in $H_1(R)$, $H_1(R)$ is just generated by $\{\alpha',\beta'\}$.  Notice also that there are geometric dual tori $T_\alpha^*$ and $T_\beta^*$ to $T_\alpha$ and $T_\beta$. For example, one can take the tori parametrized as $T_\alpha^*=(b, \beta'')$ and $T_\beta^*=(z,\alpha'')$, where $z$ is the gray curve given in Figure~\ref{fig:S} and $\{\beta'',\alpha''\}$ 
%are curves on $T_\beta^*$ and $T_\alpha^*$ which 
project to $\beta$ and $\alpha$. This implies that $\mu_\alpha$ is freely homotopic to $[b,\beta'']$ and $\mu_\beta$ is freely homotopic to $[z,\alpha'']$ in $R \setminus (T_\alpha \cup T_\beta)$; in particular both $\mu_{T_\alpha}$ and $\mu_{T_\beta}$ are trivial in $H_1(R\setminus (T_\alpha \cup T_\beta))$. Thus $H_1(R\setminus (T_\alpha \cup T_\beta))$ is still generated by $\{\alpha'_\partial,\beta'_\partial\}$. Finally, we obtain $V$ by filling $R\setminus (T_\alpha \cup T_\beta))$ with slopes $\mu_{T_\beta}\cdot\beta'_\partial$ and $\mu_{T_\alpha}\cdot\alpha'_\partial$; these fillings contribute relations which kill $\alpha_\partial'$ and $\beta_\partial'$. Hence $H_1(V)=0$.  

For the $\pi_1$ assertion, it is again standard to check that $\pi_1(V)$ is normally generated by $\alpha'_\partial, \beta'_\partial,\mu_{T_\alpha}, \mu_{T_\beta}$ and curves in the fiber $F$. We argued above that 
\begin{enumerate}
    \item in $R\setminus (T_\alpha \cup T_\beta)$, $\mu_{T_\alpha}$ is freely homotopic to $[b,\beta'']$ and $\mu_{T_\beta}$ is freely homotopic to $[z,\alpha'']$, 
    \item the filling meridians give relations $\mu_{T_\beta}\cdot\beta_\partial'=1$ and $\mu_{T_\alpha}\cdot\alpha_\partial'=1.$
\end{enumerate} We will argue that $\pi_1(V)/\langle\langle\pi_1(F)\rangle\rangle=0$. It follows from Item 1 that both $\mu_{T_\alpha}$ and $\mu_{T_\beta}$ die in $\pi_1(V)/\langle\langle\pi_1(F)\rangle\rangle$. In then follows from Item 2 that both $\alpha_\partial'$ and $\beta_\partial'$ die in $\pi_1(V)/\langle\langle\pi_1(F)\rangle\rangle$. Since we have killed all normal generators of $\pi_1(V),$ we get that $\pi_1(V)/\langle\langle\pi_1(F)\rangle\rangle=0$, as desired.
\end{proof}

\begin{lemma}\label{lem:section}
    There exist a pair of disjoint embedded surfaces $\Gamma,\Gamma',$ each generating $H_2(V,\partial V)$, whose boundaries in $\partial V$ are the subundles over $S^1$ given by restricting the fiber to the two fixed points of $\phi$.   
\end{lemma}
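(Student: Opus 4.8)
The plan is to realize $\Gamma$ and $\Gamma'$ as two disjoint ``flat sections'' of the surface bundle $R$, one lying over each fixed point of $\phi$, and then to check that they miss the two surgery tori, so that they descend unchanged into $V$. Write $\Sigma$ for the once-punctured torus base of $R$, so $\pi_1(\Sigma)=\langle\alpha,\beta\rangle$ and the monodromy representation $\rho\colon\pi_1(\Sigma)\to\mathrm{Diff}(F)$ is determined by $\rho(\beta)=ab$ and $\rho(\alpha)=\phi$ (recall a letter denotes the Dehn twist along that curve). Since $\phi$ is an orientation-preserving involution of the genus-$2$ surface $F$ with torus quotient, it has exactly two fixed points $p_1,p_2$. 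The first thing I would establish is the key observation that $p_1,p_2\notin a\cup b\cup c\cup d\cup e$: if $p_i\in a$ then $p_i=\phi(p_i)\in\phi(a)=e$, contradicting $a\cap e=\emptyset$ (and likewise for $b,d,e$ using $\phi(b)=d$, etc.); and $p_i\notin c$ because $\phi$ preserves the oriented curve $c$ and acts on it nontrivially — were $\phi|_c=\mathrm{id}$, the involution $\phi$ would fix a curve pointwise and hence be the identity — so $\phi$ restricts to $c$ as a fixed-point-free involution of the circle.

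Granting this, I would choose the annular supports of the twists $a,b,d,e$ small enough to avoid $\{p_1,p_2\}$. Then every element of the monodromy group of $R$ — a word in $ab$, $\phi$, and their inverses — fixes each $p_i$; in particular the monodromy $(ab)\phi(ab)^{-1}\phi^{-1}=abd^{-1}e^{-1}$ of $\partial R=S^3_0(Q)$ around $\partial\Sigma$ fixes $p_i$. Hence $\{p_i\}\times\widetilde\Sigma\subset F\times\widetilde\Sigma$ is deck-group invariant and descends to an embedded copy $\Sigma_i\cong\Sigma$ of the punctured torus in $R$, with $\partial\Sigma_i$ equal to the subbundle of $S^3_0(Q)\to\partial\Sigma$ obtained by restricting the fiber to $p_i$ — exactly the curve named in the statement. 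Over $\alpha$, $\Sigma_i$ equals $\{p_i\}\times\alpha'$, disjoint from $T_\alpha=(c,\alpha')$ since $p_i\notin c$; over $\beta$ it equals $\{p_i\}\times\beta'$, disjoint from $T_\beta=(e,\beta')$ since $p_i\notin e$; and $T_\alpha,T_\beta$ lie over $\alpha,\beta$ only. Taking the surgery neighborhoods $\nu(T_\alpha),\nu(T_\beta)$ small enough to stay disjoint from $\Sigma_1\cup\Sigma_2$, the two Luttinger surgeries do not touch $\Sigma_1,\Sigma_2$; let $\Gamma,\Gamma'\subset V$ be their images. These are embedded, disjoint (since $p_1\neq p_2$), and carry the desired boundaries.

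Finally I would check that each of $\Gamma,\Gamma'$ generates $H_2(V,\partial V)\cong\mathbb{Z}$. By Lemma~\ref{lem:Valgtop}, $H_2(V)\cong\mathbb{Z}$ is generated by the fiber class $[F]$, and since $H_1(V)=0$, Poincar\'e--Lefschetz duality and the universal coefficient theorem make the intersection pairing $H_2(V,\partial V)\otimes H_2(V)\to\mathbb{Z}$ perfect. Away from the surgeries $\Gamma$ (resp.\ $\Gamma'$) is a section of $R$, hence meets a fiber $F\subset V$ transversally in a single point, so $[\Gamma]\cdot[F]=[\Gamma']\cdot[F]=\pm1$; therefore $[\Gamma]$ and $[\Gamma']$ are primitive, i.e.\ generators.

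This construction has no serious obstacle. The one point calling for care is the observation that the two fixed points of $\phi$ are disjoint from all of $a,b,c,d,e$ (so that the sections can be pushed off every twist support and both surgery tori), and this rests only on the symmetries $\phi(a)=e$, $\phi(b)=d$, the chain relations $a\cap e=b\cap d=\emptyset$, and the elementary fact that an orientation-preserving involution of a circle with a fixed point is the identity; everything else is bookkeeping.
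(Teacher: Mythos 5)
Your proposal is correct and follows the same route as the paper: take the two sections of $R$ given by the fixed points of $\phi$ (which are fixed by the whole monodromy once the twist supports are pushed off them), observe they miss the Lagrangian surgery tori so they persist into $V$, and conclude generation of $H_2(V,\partial V)$ from the single transverse intersection with the fiber via Poincar\'e--Lefschetz duality. The paper states this more briefly; your added details (fixed points avoiding $a,\dots,e$, freeness of $\phi|_c$, perfectness of the pairing) are accurate fillings-in of the same argument.
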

\begin{proof}
    Begin by considering $R$; here we can see that the fixed points of $\phi$ on $F$ are fixed by the entire monodromy of the bundle over the punctured torus. Hence these points give rise to disjoint sections $\Gamma,\Gamma'$ with  boundaries as described in the lemma statement. Since we can take our Luttinger surgeries to miss these two sections, $\Gamma, \Gamma'$ survive into $V$, where they remain disjoint. Since both $\Gamma, \Gamma'$ have one point of intersection with $F$, it is straightforward to check that either one generates $H_2(V,\partial V)$.
\end{proof}

With the algebraic topology of $V$ in hand, we are ready to check that $W$ has the desired algebraic topology.  
%The non-sliceness of the figure-eight knot will be done in the following section.  
\begin{lemma}\label{lem:Wproperties}
The manifold $W$ is a spin rational homology sphere with $H_1(W) = \mathbb{Z}/2$.  
\end{lemma}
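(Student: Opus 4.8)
The plan is to realize $W$ as $V \cup_{\partial V} E$, where $E$ is a regular neighborhood of the $3$-manifold $Y := S^3_0(Q)/\sigma$, and then compute by Mayer--Vietoris. Concretely, a neighborhood of $Y$ in $W$ is the mapping cylinder $E$ of the double cover $p \colon S^3_0(Q) \to Y$, i.e.\ the $I$-bundle over $Y$ associated to this cover; it deformation retracts onto $Y$ and has $\partial E = S^3_0(Q)$. Because $\sigma$ is orientation-reversing, $p$ is the orientation double cover, so $E$ is the orientation $I$-bundle and $w_1(E) = w_1(Y) + w_1(\nu_Y) = 2\,w_1(Y) = 0$; hence $E$, and therefore $W = V\cup_{\partial V} E$, is orientable. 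Since $\partial E = S^3_0(Q) \hookrightarrow E$ is homotopic to $p$, the Mayer--Vietoris sequence of $W = V\cup E$ involves only the homology of $V$ (Lemma~\ref{lem:Valgtop}), the homology $\mathbb{Z},\mathbb{Z},\mathbb{Z},\mathbb{Z}$ of $S^3_0(Q)$, and the homology of $Y$.

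To compute $H_1(Y)$ I would use the extension $1 \to \pi_1(S^3_0(Q)) \to \pi_1(Y) \to \mathbb{Z}/2 \to 1$, with $t$ mapping to the generator, $t^2 = w \in \pi_1(S^3_0(Q))$, and $txt^{-1} = \sigma_\#(x)$. The key point is that $\sigma$ acts as $+1$ on $H_1(S^3_0(Q);\mathbb{Z}) \cong \mathbb{Z}$: it is a bundle map covering rotation by $\pi$ on the base $S^1$, and $\pi_* \colon H_1(S^3_0(Q);\mathbb{Z}) \to H_1(S^1;\mathbb{Z})$ is an isomorphism (by the Wang sequence, as $H_1(S^3_0(Q);\mathbb{Z}) = \mathbb{Z}$), so $\sigma_*$ is induced by a degree-one self-map of $S^1$ and is the identity. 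Abelianizing now gives $H_1(Y;\mathbb{Z}) = \langle \mu, t \mid 2t = k\mu\rangle$, where $\mu$ is a generator of $H_1(S^3_0(Q))$, $p_*\mu$ is the class called $\mu$, and $k$ is the image of $w$. Mayer--Vietoris then gives $H_1(W) = \operatorname{coker}\big(H_1(S^3_0(Q)) \xrightarrow{(0,p_*)} H_1(V)\oplus H_1(Y)\big) = H_1(Y)/\langle p_*\mu\rangle$, and imposing $\mu = 0$ collapses the relation to $2t = 0$, so $H_1(W) = \mathbb{Z}/2$ irrespective of $k$.

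For the rational statement: $H_1(Y;\mathbb{Z})$ has rank $1$, so $\chi(Y) = 0$ forces $b_2(Y) = 0$, hence $H_2(Y;\mathbb{Q}) = 0$; then the rational Mayer--Vietoris map $H_2(S^3_0(Q);\mathbb{Q}) \to H_2(V;\mathbb{Q}) \oplus H_2(Y;\mathbb{Q})$ is really just $H_2(S^3_0(Q);\mathbb{Q}) \to H_2(V;\mathbb{Q})$, which carries the fibre class to the generator (Lemma~\ref{lem:Valgtop}) and is therefore an isomorphism. This forces $H_3(W;\mathbb{Q}) = H_2(W;\mathbb{Q}) = 0$, and together with $H_1(W;\mathbb{Q}) = 0$ and $H_4(W;\mathbb{Q}) = \mathbb{Q}$ we get that $W$ is a rational homology $4$-sphere (and $H_2(W;\mathbb{Z}) \cong \mathbb{Z}/2$ by Poincar\'e duality).

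It remains to show $W$ is spin, which is the crux. Here $V$ is spin by Lemma~\ref{lem:Valgtop}, and $E$ is spin because for the closed $3$-manifold $Y$ the Wu formula gives $w_2(Y) = w_1(Y)^2$, so $w_2(E) = w_2(Y) + w_1(Y)^2 = 0$. The difficulty is matching the spin structures along $\partial V$. Since the square of an orientation-reversing loop in $Y$ is isotopic to a section of $S^3_0(Q) \to S^1$, which generates $H_1(S^3_0(Q);\mathbb{Z})$, the integer $k$ is odd; hence $p_* \colon H_1(S^3_0(Q);\mathbb{Z}) \to H_1(Y;\mathbb{Z}) \cong \mathbb{Z}$ is multiplication by $2$, the restriction $H^1(Y;\mathbb{Z}/2) \to H^1(S^3_0(Q);\mathbb{Z}/2)$ vanishes, and so every spin structure on $E$ restricts to one and the same spin structure on $\partial V$; one must check it is the restriction of the unique spin structure on $V$. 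Equivalently, the double cover $\widetilde W \to W$ determined by $H_1(W) = \mathbb{Z}/2$ is the double $V\cup_{\partial V}\overline V$, which is spin, so the Gysin sequence forces $w_2(W) \in \{0,\rho^2\}$, where $\rho$ generates $H^1(W;\mathbb{Z}/2) \cong \mathbb{Z}/2$; and $\rho^2$ is isotropic because $\langle\rho^4,[W]\rangle = \langle w_1(Y)^3,[Y]\rangle = 0$ (every closed $3$-manifold bounds). So it is enough to produce one more class in $H^2(W;\mathbb{Z}/2) \cong (\mathbb{Z}/2)^2$, independent of $\rho^2$, with vanishing mod-$2$ self-intersection; since no Poincar\'e dual of an orientable surface lies outside $\{0,\rho^2\}$, one would use a non-orientable surface -- for instance the closed surface obtained by gluing the sections $\Gamma,\Gamma'$ of Lemma~\ref{lem:section} along their boundary circles via $\sigma$ -- and verify that its normal Euler number is even. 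I expect this mod-$2$ self-intersection computation (equivalently, the direct comparison of the two spin structures on $S^3_0(Q)$) to be the technical heart of the argument.
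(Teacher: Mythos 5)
Your computations of $H_1(W)$ and of the rational homology (via the decomposition $W = V\cup_{\partial V}E$ with $E$ the twisted $I$-bundle over $Y=S^3_0(Q)/\sigma$ and Mayer--Vietoris) are correct, though more laborious than necessary: the paper gets both facts immediately from the double cover $Z=V\cup_\sigma V$, which has $H_1=0$ and $\chi=4$. The genuine problem is the spin statement, which is the actual content of the lemma, and which your proposal does not prove. You correctly reduce it to showing that a certain mod-2 class not in $\{0,\rho^2\}$ has even self-intersection -- concretely, that the closed non-orientable surface obtained from the section(s) of Lemma~\ref{lem:section} has even normal Euler number (equivalently, that $\sigma$ preserves the spin structure that $V$ induces on $S^3_0(Q)$) -- and then you explicitly defer exactly this verification as ``the technical heart.'' That verification \emph{is} the paper's proof: one observes that $\sigma$ restricts to a free, orientation- and component-preserving involution on $\partial\Gamma\cup\partial\Gamma'$, so that $\Gamma$ itself descends to a closed non-orientable surface in $W$ with zero mod-2 self-intersection, and together with the image of $F$ (which meets it once) this exhibits the $\mathbb{Z}/2$-intersection form of $W$ as hyperbolic, whence $w_2(TW)=0$ by Wu and nondegeneracy of the pairing. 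Without that step your argument establishes only $w_2(W)\in\{0,\rho^2\}$, which is not enough.

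Moreover, one asserted shortcut in your sketch is wrong as stated: the double cover of $W$ determined by $H_1(W)=\mathbb{Z}/2$ is $V\cup_\sigma V$, glued by the orientation-reversing involution $\sigma$, not the double $V\cup_{\mathrm{id}}\overline V$; its spinness is not automatic but is precisely equivalent to the spin-structure matching question you left open (note the paper deduces that $Z$ is spin \emph{from} $W$ being spin in Theorem~\ref{thm:S2timesS2}, not the other way around). Similarly, your guess that one should glue $\Gamma$ to $\Gamma'$ along their boundaries via $\sigma$ has the combinatorics backwards in this case: $\sigma$ preserves each boundary circle setwise, so each section closes up on its own. So the proposal is a reasonable framework, but the decisive computation is missing and one of the supporting claims would need to be repaired.
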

\begin{proof}
Notice that $W$ is 2-fold covered by $V\cup_\sigma V$, which has no $H_1$. This shows $H_1(W)=\mathbb{Z}/2$.  Since the double cover of $W$ has Euler characteristic 4, we see that $b_2(W) = 0$, and hence $W$ is a rational homology sphere.  

To see that $W$ is spin, we just need to check that $w_2(TW)$ vanishes. By the Wu formula, $w_2(TW)$ is characteristic on $H_2(W,\mathbb{Z}/2)$. As such it suffices to show that the intersection form on $H_2(W,\mathbb{Z}/2)$ is even. We will prove now that this intersection form is the hyperbolic form.

By another Euler characteristic argument, we see that $H_2(W,\mathbb{Z}/2)$ is $\mathbb{Z}/2 \oplus \mathbb{Z}/2$. We will demonstrate a pair of 2-cycles in $H_2(W,\mathbb{Z}/2)$ which both have self-intersection 0, and which have pairwise intersection 1. As such, these cycles will form a hyperbolic pair for $H_2(W,\mathbb{Z}/2)$, and we will be done. 

The first 2-cycle is the image of $F$ after the quotient; that $F$ has self-intersection 0 is inherited from $V$. The second cycle will be the image of the generator $\Gamma$ of $H_2(V,\partial V)$ that we set up in Lemma \ref{lem:section}. Note that $\Gamma$ has intersection 1 with $F$ in $V$, and this will be preserved in the quotient. We need to check that $\Gamma$ gives a cycle in $H_2(W,\mathbb{Z}/2)$, and compute its self-intersection. Both of these claims follow from the observation that the quotient map $\sigma$ acts as an orientation and component-preserving free involution on the boundaries of $\Gamma$ and $\Gamma'$, so $\Gamma$ descends to a closed (non-orientable) surface. 
\end{proof}

\section{Obstructing sliceness}\label{sec:sliceness}
In this section, we will prove that $4_1$ is not slice in the manifold $W$ constructed above, completing Theorem~\ref{thm:slice}.  
%Recall that $W$ is defined to be $V/ \sigma$. 
We will obstruct sliceness in $W$ by studying the genus function of the double-cover 
%of $V/\sigma$, which is 
$V \cup_\sigma V$.  Since $V$ is a homology 0-trace with amphichiral boundary, we see $V \cup_\sigma V$ is a closed 4-manifold with $b_2=2$. In fact, this closed manifold is symplectic.

\begin{theorem}\label{thm:S2timesS2}
The closed manifold $Z=V\cup_\sigma V$ is a symplectic cohomology $S^2 \times S^2$ not diffeomorphic to $S^2 \times S^2$. 
\end{theorem}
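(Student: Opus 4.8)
The plan is to check three things about $Z=V\cup_\sigma V$: that it has the integral cohomology ring of $S^2\times S^2$, that it is symplectic, and that it is not diffeomorphic to $S^2\times S^2$.

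\emph{Cohomology ring.} I would run Mayer--Vietoris for the decomposition $Z=V\cup_\sigma V$. Since $H_1(V)=0$ (Lemma~\ref{lem:Valgtop}) and $\partial V$ is connected, the relevant portion of the sequence forces $H_1(Z)=0$; in particular $H_*(Z)$ is torsion-free. As $\chi(Z)=\chi(V)+\chi(V)=4$, we get $b_2(Z)=2$, so the intersection form is rank-two and unimodular. It is moreover even, because $Z$ is the orientation double cover of the spin manifold $W$ (Lemma~\ref{lem:Wproperties}), so the spin structure pulls back; since there is no rank-two definite even unimodular form, the form is hyperbolic. Hence $H^*(Z;\mathbb{Z})\cong H^*(S^2\times S^2;\mathbb{Z})$ as graded rings. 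One can name an explicit hyperbolic basis: the fiber class $[F]$, which has square $0$, together with the class of the closed genus-two surface $\widehat\Gamma$ obtained by gluing the surface $\Gamma\subset V$ of Lemma~\ref{lem:section} in one copy of $V$ to the corresponding $\Gamma$ in the other along their identified boundary; these pair to $1$.

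\emph{Symplectic structure.} Before the Luttinger surgeries, $R\cup_\sigma R$ is a genus-two surface bundle over the closed surface obtained by doubling the once-punctured torus along its boundary, and $Z$ is recovered by performing the four $+1$-Luttinger surgeries of Section~\ref{sec:constructions}, two along $T_\alpha,T_\beta$ in each copy. By Thurston's theorem \cite{Thurston} this bundle carries a symplectic form making the fibers symplectic, and --arguing exactly as in the construction of $V$-- the four surgery tori can be taken Lagrangian; since Luttinger surgery preserves symplectic structures, $Z$ is symplectic. In particular the fiber $F$, being disjoint from all the surgery tori, is an embedded symplectic surface of genus $2$ with $[F]^2=0$.

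\emph{Not diffeomorphic to $S^2\times S^2$.} If $Z$ were diffeomorphic to $S^2\times S^2$, then transporting the symplectic form of the previous step would give $S^2\times S^2$ a symplectic form $\omega'$ admitting an embedded symplectic surface of genus $2$ with self-intersection $0$. This is impossible: for any symplectic form $\omega'$ on $S^2\times S^2$ the canonical class is characteristic with $K_{\omega'}^2=8$, hence $K_{\omega'}=\pm 2(S_1+S_2)$ in the standard sphere basis, and it is classical that $K_{\omega'}\cdot[\omega']<0$; feeding this (together with $\int_\Sigma\omega'>0$) into the adjunction formula for a symplectic surface $\Sigma$ with $[\Sigma]^2=0$ forces $g(\Sigma)=0$. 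This contradiction shows $Z\not\cong S^2\times S^2$. (One can also argue via Taubes' nonvanishing theorem for the Seiberg--Witten invariant of a symplectic $4$-manifold together with the classification of minimal symplectic $4$-manifolds with $b_2^+=1$, noting that $Z$ is minimal since its intersection form, being even, contains no class of square $-1$; but the adjunction argument is cleaner and sidesteps the $b_2^+=1$ chamber subtleties.)

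\emph{Main obstacle.} The real work is in producing the symplectic form on $Z$: the surface-bundle-plus-Luttinger description makes this essentially formal \emph{were the bundle oriented}, but the gluing map $\sigma$ incorporates the orientation-reversing involution $\epsilon$ of the fiber, so $R\cup_\sigma R$ is not an oriented surface bundle in the usual sense. The point to verify is that Thurston's construction still produces a symplectic form for which the four surgery tori are simultaneously Lagrangian --for instance by passing to the orientation double cover of the base, or by recognizing $\partial V$ as a contact-type hypersurface across which $\sigma$ reverses the co-orientation, so that $Z$ is assembled from a convex and a concave symplectic filling.
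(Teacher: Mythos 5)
Your outline matches the paper's for the first two steps (Mayer--Vietoris plus the double cover of the spin manifold $W$ for the cohomology ring, and the ``surface bundle over a surface plus four Luttinger surgeries'' picture for symplecticness), but the one point you defer --- the orientation problem in the gluing --- is exactly where the paper has to do something, and your two suggested remedies would not close it as stated. Passing to the orientation double cover of the base produces a \emph{different} $4$-manifold (a cover of $R\cup_\sigma R$), so a symplectic form there says nothing about $Z$; and $\partial V$ with a Thurston-type form is not a contact-type hypersurface, so the convex/concave assembly is unjustified. The actual fix is elementary: regard the second copy of $R$ as oriented by $(-F,-B)$ --- this is the same oriented manifold $R$ --- and with these conventions $\sigma$ preserves the fiber orientation and reverses the base orientation, so $R\cup_\sigma R$ is honestly an oriented genus $2$ surface bundle over an oriented genus $2$ surface. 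Thurston's construction \cite{Thurston} then applies verbatim, the fiberwise area form can be chosen invariant under all monodromies so that the four surgery tori are Lagrangian, and Luttinger surgery \cite{Luttinger} gives the symplectic form on $Z$. Also, a small correction: $W$ is orientable, so $Z\to W$ is just a double cover, not the orientation double cover; the substance of your evenness argument is unaffected.

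Your last step is genuinely different from the paper's and is valid. The paper argues via symplectic Kodaira dimension: $R\cup_\sigma R$ is minimal (aspherical) hence not rational or ruled, Kodaira dimension is a diffeomorphism invariant of minimal symplectic $4$-manifolds \cite{LiKodaira} and is unchanged by Luttinger surgery \cite{HoLi}, and $Z$ is minimal since it is spin; so $\kappa(Z)\neq-\infty$ while $\kappa(S^2\times S^2)=-\infty$. You instead use the symplectic genus $2$, square-zero fiber $F\subset Z$ (disjoint from the surgery tori, hence still symplectic) together with the adjunction formula and the fact that every symplectic form $\omega'$ on $S^2\times S^2$ has $K_{\omega'}\cdot[\omega']<0$; your arithmetic in the hyperbolic basis is correct (note $\omega'\cdot S_1$ and $\omega'\cdot S_2$ necessarily have the same sign since $[\omega']^2>0$), so no symplectic $S^2\times S^2$ contains such a surface. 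The trade-off: your route avoids the Ho--Li invariance of $\kappa$ under Luttinger surgery, but the input ``$K_{\omega'}\cdot[\omega']<0$'' is itself a deep theorem (Taubes, Li--Liu, Lalonde--McDuff uniqueness of symplectic structures on $S^2\times S^2$), so the two arguments are of comparable depth; yours has the small additional virtue of not needing minimality of $Z$ at all, while the paper's has the virtue of quoting statements that apply uniformly to all Luttinger-surgered surface bundles.
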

\begin{proof}
We will first check symplecticness, then cohomology and spinness, and conclude by showing $Z$ is not $S^2\times S^2$.

To see that $Z$ is symplectic, notice that the gluing $\sigma$ respects the fiber structure on $\partial V\cong\partial R$ (see Lemma \ref{lem:gluing-symplectic}). Under such a gluing, we can think of $Z$ as a genus 2-bundle over a genus 2-surface\footnote{Out of the box, $\sigma$ is orientation reversing on the fiber and preserving on the section; to get $R\cup_\sigma R$ to be an oriented surface bundle over an oriented surface we want the opposite. But we can fix this by considering our second copy of $R$ to have orientation given by $(-F, -B)$. Notice that this is still just $R$ (as an oriented manifold), but from this perspective when we glue $R\cup_\sigma R$ we have that $\sigma$ preserves orientation in the fiber and flips it in the sections; resulting in a closed genus 2 surface bundle over a genus 2 surface, as desired.} (namely $R\cup_\sigma R$) to which we have performed four torus surgeries. Since genus 2 surface bundles over surfaces are symplectic \cite{Thurston}, our tori are Lagrangian, and Luttinger surgeries preserve symplecticness \cite{Luttinger}, we have that $Z$ is symplectic. 

It is easy to see that $H_*(Z) \cong H_*(S^2 \times S^2)$.  To see that $Z$ has the same cohomology ring as $S^2 \times S^2$, we need to check that $Z$ is spin.  This is follows from the fact that $Z$ is the double cover of the spin manifold $W$.  %Let $\mfs$ denote the induced spin structure $\mfs$ on $S^3_0(K)$. It suffices to show that $\sigma^*(\mfs) = \mfs$.  First we note that $\sigma$ is the same as the free involution coming from the fact that the square knot is strongly negative amphichiral.  This can be seen by observing that the orientation reversing homeomorphism on the fiber surface of $S^3_0(Q)$ restricts to an orientation reversing homeomorphism on the punctured fiber surface, which then induces an orientation reversing homeomorphism of $S^3\setminus\nu(Q)$. Since $\sigma$ comes from a symmetry of $S^3$ on $Q$, we can argue that $\sigma$ preserves spin structures as follows: note that $\sigma$ extends to a diffeomorphism $F$ of $X_0(Q)$.  Since $X_0(Q)$ also has a unique spin structure $\mft'$, $F^*(\mft') = \mft'$, and hence $\sigma^*(\mft'|_{S^3_0(Q)}) = \mft'|_{S^3_0(Q)}$.  Since there are exactly two spin structures on $S^3_0(Q)$, we must have $\sigma^*$ fixes both of them.  

We have seen that $Z$ is a symplectic four-manifold.  To show $Z$ is not diffeomorphic to $S^2 \times S^2$ we reproduce an argument of Akhmedov-Park.  The Kodaira dimension $\kappa$ of a minimal symplectic four-manifold is a diffeomorphism invariant by \cite{LiKodaira}.  This invariant is $-\infty$ if and only if the manifold is rational or ruled, e.g. $S^2 \times S^2$.  A non-trivial genus 2 surface bundle over a genus 2 surface is minimal by asphericity, and thus never rational or ruled. Hence $R\cup_\sigma R$ has $\kappa \neq -\infty$.  Because $\kappa$ is preserved by Luttinger surgeries \cite{HoLi} and $Z$ is minimal (e.g. because it is spin), the Kodaira dimension shows that $Z$ is not diffeomorphic to $S^2 \times S^2$.  This completes the proof.  \end{proof}
% First, we neeed to better understand the free involution $\sigma$ on $S^3_0(Q)$.   

\begin{proof}[Proof of Theorem~\ref{thm:slice}]
The knot $4_1$ is slice in $B$ by construction. It remains to prove that $4_1$ is not slice in $W$.  Suppose for a contradiction that it was.  Since $W$ is spin, the Arf invariant obstructs $4_1$ from bounding a nullhomoloogous slice disk $D$ (see e.g. \cite[Theorem 2]{Klug}).   

%First, consider the case that the slice disk is trivial in $H_2(W,\partial W)$.  attach a $+1$-framed 2-handle to $W$ along $4_1$.  This four-manifold has boundary $\Sigma(2,3,7)$ but is no longer spin.  However, there is a $+1$-sphere which we can blow down, and the result is a spin rational homology ball.  \textcolor{red}{I don't know how to prove this, but it must be true because it's purely homological.}  This is a contradiction to $\mu(\Sigma(2,3,7)) = 1$.  

So $4_1$ would have to bound a slice disk which is non-trivial in homology.  This imples that $X_0(4_1)$ embeds in $W$ with nontrivial inclusion induced map on $H_2$.  Inside of $X_0(4_1)$ is a square-zero torus $T$ obtained by capping a Seifert surface for $4_1$ with the slice disk, and this $T$ is non-trivial in $H_2(W)$.  Since $X_0(4_1)$ is simply-connected, $T$ lifts to 
%the double cover, which is $Z = V \cup_\sigma V$. The torus $T$ lifts to 
a torus (still called $T$) which is non-trivial in $H_2(Z)$. We will borrow an argument from \cite[Theorem 1.4]{StipsiczSzabo} to show that this cannot happen.   
%using the resolution of the Thom conjecture, the following result is established
They show the following: if $M$ is a symplectic cohomology $S^2 \times S^2$ obtained by taking a genus 2 surface bundle over a genus 2 base where the fiber and section form a hyperbolic pair and doing Luttinger surgery on disjoint Lagrangian tori that miss a fiber and a section, then no non-trivial square-zero class in $H_2(M)$ is represented by a torus. We claim that this is exactly the setting we are working in. The only claim we have not already established is that we can find a fiber and section that form a hyperbolic pair disjoint from the surgery tori; in particular we just need to check that our $R\cup_\sigma R$ has a square $0$ section disjoint from the surgery tori.  In Lemma \ref{lem:section} we already established two disjoint generators $\Gamma, \Gamma'$ of $R$ which are disjoint from the surgery tori and whose boundaries are a pair of circles which are setwise preserved by $\sigma$.  Hence, $\Gamma \cup_\sigma \Gamma$ and $\Gamma' \cup_\sigma \Gamma'$ form disjoint sections.  It follows that $R \cup_\sigma R$ has a square-0 section disjoint from the surgery tori. 
%since $Z$ is such a four-manifold. 
\end{proof}

\section{Other constructions}\label{sec:other}
In this section, we prove Theorems~\ref{thm:1+1} and \ref{thm:1+5}.  The arguments use Floer homology.  As we believe more readers are familiar with Heegaard Floer homology than monopole Floer homology, we have written the arguments in this language.  However, similar arguments can be applied for the so-called small-perturbation Seiberg-Witten invariants using monopole Floer homology (see e.g. \cite[Lecture 5]{SixLectures}, \cite[Section 27]{MonopoleBook}).  

\subsection{Heegaard Floer mixed invariants}
To begin, we quickly review the Heegaard Floer mixed invariants for four-manifolds with $b^+ = 1$ as described in \cite{OSSymplectic}.  Let $M$ be a closed four-manifold with $b^+ = 1$ and $L \subset H_2(M;\mathbb{Q})$ a line described as the span of a square-zero class.  Decompose $M = M_1 \cup_Y M_2$ where the image of $H_2(Y;\mathbb{Q})$ in $H_2(M;\mathbb{Q})$ is $L$. If $\mft$ is a spin$^c$ structure on $M$ which restricts to be non-torsion on $Y$, then we have 
\[
\Phi_{M,L, \mft} = \langle \Psi_{M_1, \mft_1} , \Psi_{M_2, \mft_2} \rangle_{HF_{red}(Y)}
\]
where $\langle \cdot, \cdot \rangle_{HF_{red}(Y)}$ denotes the non-degenerate pairing on $HF_{red}(Y)$ and $\Psi_{M_i, \mft_i}$ denotes the relative invariant, i.e. the projection of the image of 1 under $F^-_{M_i-B^4,\mft_i}: HF^-(S^3) \to HF^-(Y)$ to $HF_{red}(Y)$.\footnote{This projection is well-defined if $\mft|_Y$ is non-torsion.  If one works with $U$-completed coefficients, there is no projection necessary.}     
Ozsv\'ath-Szab\'o establish that $\Phi_{M,L, \mft}$ is an invariant of the triple $(M,L,\mft)$.  While not necessary for this paper, if one is interested in spin$^c$ structures which restrict to be torsion on $Y$, then an analogous invariant can be studied using perturbed coefficients.  In practice, the mixed invariants can depend on the choice of $L$.  %For example, in $\cptwo \#_9 \cptwobar$, if one uses a square-zero sphere for the line, then the invariants vanish.  However if one uses the line coming from the span of the fiber in a genus 1 Lefschetz fibration over $S^2$, the associated invariants are non-vanishing \cite{OSSymplectic}; an analogous computation in Seiberg-Witten theory is explained in \cite[Section 27.5]{MonopoleBook}, see also \cite[Lecture 5]{SixLectures}.  
As such, when we try to use these $b^+=1$ mixed invariants to obstruct the existence of diffeomorphisms, we will have to make an additional argument to deal with the ambiguity in choice of line.

\subsection{The proofs of Theorem~\ref{thm:1+1} and \ref{thm:1+5}}
In order to prove Theorem~\ref{thm:1+1} and Theorem~\ref{thm:1+5} we need to compute some mixed  invariants.  For this, we need to understand the relative invariants of our key piece $V$.
%since this is a key piece. 
Fix an orientation of the genus 2 fiber $F$ in $S^3_0(Q)$.  Let $\mfs_{\pm}$ denote the spin$^c$ structure on $S^3_0(Q)$ with $\langle c_1(\mfs_{\pm}), F \rangle = \pm 2$.

\begin{lemma}\label{lem:non-vanishing}
Let $\mft$ be a spin$^c$ structure on $V$ with $|\langle c_1(\mft), F \rangle| = 2$.  Then, $\Psi_{V,\mft} \neq 0$.     
\end{lemma}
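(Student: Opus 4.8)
The plan is to deduce the non-vanishing of the relative invariant $\Psi_{V,\mft}$ from the fact that $V$ is a symplectic manifold with boundary and that $\mft$ is the canonical spin$^c$ structure (or its conjugate) coming from the symplectic structure. Recall from Section~\ref{sec:constructions} that $V$ is symplectic with canonical class evaluating to $\pm 2$ on the fiber $F$; thus a spin$^c$ structure $\mft$ on $V$ with $|\langle c_1(\mft), F\rangle| = 2$ is (after possibly conjugating) the one induced by the symplectic form on $V$. The natural tool is the relationship between relative Heegaard Floer invariants and symplectic (or Stein/Weinstein) fillings, in the spirit of the Ozsv\'ath--Szab\'o non-vanishing theorem for closed symplectic $4$-manifolds with $b^+>1$ and its relative refinements, together with the fact that a Luttinger surgery along a Lagrangian torus preserves the symplectic structure (as already invoked for Theorem~\ref{thm:S2timesS2}).

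First I would pin down the boundary: $\partial V \cong S^3_0(Q)$, which is the fibered $3$-manifold with fiber the genus $2$ surface $F$ and monodromy $abd^{-1}e^{-1}$, and I would identify $\mft|_{\partial V}$ as $\mfs_{\pm}$, the spin$^c$ structure with $\langle c_1, F\rangle = \pm 2$; note this is the extremal (``top'') spin$^c$ structure for the fibration, which is non-torsion on $\partial V$, so the projection to $HF_{red}(\partial V)$ defining $\Psi_{V,\mft}$ makes sense. Since $S^3_0(Q)$ fibers over $S^1$ with fiber of genus $2$, its Heegaard Floer homology in the top spin$^c$ structure is known (it is the homology of the fiber's top exterior power summand in knot Floer homology of $Q$, i.e. determined by the monodromy), and in particular the relevant group is nonzero. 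Second, I would invoke the relative non-vanishing principle: because $V$ is a symplectic filling of $(S^3_0(Q), \xi)$ where $\xi$ is (isotopic to) the contact structure induced on a convex/appropriate boundary — or, more robustly, because $V$ is built from the symplectic surface bundle $R$ by Luttinger surgeries — the relative invariant $\Psi_{V,\mft}$ of the canonical spin$^c$ structure is non-trivial. Concretely, I expect to either (a) cite a version of the Ozsv\'ath--Szab\'o symplectic non-vanishing theorem adapted to manifolds with fibered boundary (as used in \cite{OSSymplectic} and later refinements), applied first to $R$ and then propagated through the Luttinger surgeries, or (b) argue via the closed manifold: cap off $V$ on the other side by $V$ (or by a suitable piece) to produce the symplectic closed $4$-manifold $Z = V\cup_\sigma V$ of Theorem~\ref{thm:S2timesS2}, whose mixed invariant is non-zero by Taubes/Ozsv\'ath--Szab\'o symplectic non-vanishing, and then use the gluing formula $\Phi_{Z,L,\mft} = \langle \Psi_{V,\mft_1}, \Psi_{V,\mft_2}\rangle$ to conclude $\Psi_{V,\mft}\neq 0$.

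The cleanest route is likely (b): by Theorem~\ref{thm:S2timesS2}, $Z$ is symplectic with $b^+ = 1$ and $\kappa \neq -\infty$, so its canonical spin$^c$ structure $\mft_Z$ has non-vanishing mixed invariant $\Phi_{Z, L, \mft_Z}$ (here $L$ is the line spanned by $[F]$, which is square-zero, and $\mft_Z$ restricts non-torsion to $Y = \partial V$ since $\langle c_1(\mft_Z), F\rangle = \pm 2 \neq 0$). Since $\mft_Z$ restricts to $\mft$ on each copy of $V$ — both factors being the canonical spin$^c$ structure, with $|\langle c_1, F\rangle| = 2$ — the pairing formula $\Phi_{Z,L,\mft_Z} = \langle \Psi_{V,\mft}, \Psi_{V,\mft}\rangle_{HF_{red}(\partial V)}$ forces $\Psi_{V,\mft}\neq 0$. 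The two copies of $\mft$ on the two copies of $V$ are both the one satisfying the hypothesis of the lemma, handled using that $\sigma$ is orientation-reversing on the boundary so that the canonical classes match up as required by the pairing.

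The main obstacle I anticipate is justifying the symplectic non-vanishing input precisely: the $b^+=1$ mixed invariant requires care about the chamber/wall structure and the choice of line $L$, and one must ensure that for $Z$ the relevant mixed invariant in the canonical spin$^c$ structure is genuinely non-zero (not killed by a wall-crossing), which is where citing \cite{OSSymplectic} — and using that $Z$ is minimal of non-negative Kodaira dimension, so the symplectic canonical class is the relevant basic class — is essential. A secondary technical point is confirming that $\mft$ with $|\langle c_1(\mft),F\rangle|=2$ is exactly the (conjugate of the) canonical spin$^c$ structure of $V$ and not some other spin$^c$ structure with the same pairing with $F$; since $H_2(V) = \mathbb{Z}\langle F\rangle$ and $H_1(V) = 0$, a spin$^c$ structure on $V$ is determined by $c_1$ and hence by $\langle c_1, F\rangle$ up to the $2$-torsion in $H^2(V;\mathbb{Z})$, which vanishes, so in fact $\mft$ is uniquely determined and must be canonical or anti-canonical — I would make this identification explicit early on so the rest goes through cleanly.
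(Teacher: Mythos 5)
Your route (b) is essentially the paper's own proof: glue a second copy of $V$ along $\sigma$ to form the symplectic manifold $Z$, invoke the Ozsv\'ath--Szab\'o non-vanishing of the mixed invariant $\Phi_{Z,L,\mathfrak{k}}$ for $L = \mathrm{span}\{F\}$, and conclude $\Psi_{V,\mathfrak{k}|_V}\neq 0$ from the pairing formula, using $H^2(V)=\mathbb{Z}$ to see that $\mathfrak{k}|_V$ is $\mft$ or $\overline{\mft}$. The one point you gloss---that on a given copy of $V$ the canonical class may restrict to $\overline{\mft}$ rather than $\mft$---is handled in the paper by conjugation invariance of relative invariants (\cite[Theorem 3.6]{OSTriangles}), which is the cleaner way to finish than trying to match the two boundary restrictions through $\sigma$.
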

\begin{proof}
As we have seen, there exists a gluing $\sigma: \partial V \to - \partial V$ such that $Z = V \cup_\sigma V$ is symplectic.  Further, the canonical class $\mathfrak{k}$ on $Z$ satisfies $|\langle c_1(\mathfrak{k}), F \rangle| = 2$. Since $H^2(V)=\mathbb{Z}$, we must have $\mathfrak{k} |_V = \mft$ or $\overline{\mft}$. Let $L$ be the line in $H_2(Z;\mathbb{Q})$ generated by $F$.  By \cite{OSSymplectic}, $\Phi_{Z,L,\mathfrak{k}} \neq 0$.  It follows that $\Psi_{V,\mathfrak{k}|_V} \neq 0$.  Together with the fact that $\Psi_{V,\overline{\mft}} \neq 0$ if and only if $\Psi_{V,\mft} \neq 0$ \cite[Theorem 3.6]{OSTriangles}, the result follows.
\end{proof}

\begin{lemma}\label{lem:gluing-independent}
Let $(M_1,\mft_1)$ and $(M_2, \mft_2)$ be two spin$^c$ four-manifolds glued along $S^3_0(Q)$ so that $\mft_1|_{S^3_0(Q)} = \mft_2|_{S^3_0(Q)} = \mfs_{\pm}$.  If $\Psi_{M_1,\mft_1}$ and $\Psi_{M_2,\mft_2}$ are non-zero, then $\Phi_{M_1 \cup M_2, span\{F\}, \mft} \neq 0$ for any $\mft\in Spin^c(M_1 \cup M_2)$ with $\mft |_{M_i} = \mft_i$ for both $i = 1, 2$.
\end{lemma}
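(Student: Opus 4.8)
The plan is to reduce this to the gluing formula for the mixed invariant recalled above, using the fact that $S^3_0(Q)$ has simple enough Heegaard Floer homology that the relative invariants $\Psi_{M_i,\mft_i}$ pair nontrivially as soon as they are individually nonzero. First I would recall that $\mfs_\pm$ is a non-torsion $\mathrm{spin}^c$ structure on $Y := S^3_0(Q)$, since $\langle c_1(\mfs_\pm),F\rangle = \pm 2 \neq 0$, so the mixed invariant $\Phi_{M_1 \cup M_2, \mathrm{span}\{F\}, \mft}$ is defined and equals the pairing $\langle \Psi_{M_1,\mft_1}, \Psi_{M_2,\mft_2}\rangle_{HF_{red}(Y)}$ by the Ozsv\'ath--Szab\'o formula, where both relative invariants live in the summand of $HF_{red}(Y)$ corresponding to $\mfs_\pm$. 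So the task is exactly to show this pairing is nonzero given that both factors are nonzero.

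The key input is a computation of $HF^+(S^3_0(Q), \mfs_\pm)$. The square knot $Q = T_{2,3} \# T_{2,-3}$ is genus $2$ fibered (its Alexander polynomial is $(t-1+t^{-1})^2$), and one can compute $HF^+$ of $0$-surgery in the top $\mathrm{spin}^c$ structures directly — e.g. from the knot Floer homology of $Q$ via the surgery formula, or by noting that $Q$ is a connected sum and using the connected sum formula together with the known $HF^+$ of $\pm 1$-surgeries / $0$-surgeries on the trefoils. The upshot I would aim for is that $HF_{red}(Y,\mfs_\pm)$ is, up to grading shift, a single $\mathbb{Z}$ (one $\mathbb{F}$ if working over $\mathbb{F}_2$, but the paper uses $\mathbb{Z}$ coefficients implicitly through \cite{OSSymplectic}) in the relevant grading, on which the linking pairing $\langle\cdot,\cdot\rangle_{HF_{red}(Y)}$ is the obvious nondegenerate pairing $\mathbb{Z} \otimes \mathbb{Z} \to \mathbb{Z}$. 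Once $HF_{red}(Y,\mfs_\pm)$ is rank one, any two nonzero elements pair nontrivially, so $\langle \Psi_{M_1,\mft_1},\Psi_{M_2,\mft_2}\rangle \neq 0$ and we are done; the dimension/grading constraints coming from the relative invariant landing in the "correct" degree (dictated by $c_1(\mft_i)^2$ and the expected dimension of the moduli space) ensure both $\Psi$'s sit in that single rank-one piece rather than in incomparable gradings.

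The main obstacle is pinning down $HF_{red}(S^3_0(Q),\mfs_\pm)$ precisely enough — in particular, verifying it is rank one (so that nondegeneracy of the pairing is automatic) rather than merely nonzero, and checking the grading so that the two relative invariants are forced into the same summand. If it turned out that $HF_{red}(Y,\mfs_\pm)$ had rank $> 1$, one would instead need to identify $\Psi_{V,\mft}$ (and hence $\Psi_{M_i,\mft_i}$, via a gluing/naturality comparison with $V$) as a specific primitive class and argue the pairing is still nonzero; but for the square knot the rank-one computation should go through, making this step routine modulo a careful bookkeeping of the knot Floer package for $T_{2,3}\#T_{2,-3}$. I would also remark that the same argument runs verbatim in monopole Floer homology, replacing $HF_{red}$ by the reduced monopole group and \cite{OSSymplectic} by the corresponding symplectic non-vanishing theorem.
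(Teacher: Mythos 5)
Your proposal is correct and takes essentially the same approach as the paper: the entire content is that $HF_{red}(S^3_0(Q),\mfs_\pm)$ has rank one, so nondegeneracy of the pairing forces any two nonzero relative invariants to pair nontrivially. The only difference is how that rank-one fact is justified --- the paper gets it in one line from fiberedness of $Q$ via \cite[Theorem 5.2]{OSSymplectic} rather than from a knot Floer computation for $T_{2,3}\#T_{2,-3}$, which also makes your worries about gradings and possible higher rank moot.
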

\begin{proof}
Note that $HF_{red}(S^3_0(Q), \mfs_\pm) = \mathbb{F}$ since $S^3_0(Q)$ is fibered with fiber surface $F$, which has genus 2 \cite[Theorem 5.2]{OSSymplectic}.  Therefore, two elements pair to be non-zero in $HF_{red}(S^3_0(Q), \mfs_\pm)$ if and only if they are non-zero.  The result follows.
\end{proof}

%\begin{lemma}\label{lem:gluing-independent}
%Let $Y$ be a fibered three-manifold with fiber genus $g > 1$.  Suppose that $W_1, W_2$ are two four-manifolds with boundary $Y, -Y$ respectively and the relative invariants are non-vanishing in the maximal spin$^c$ structure \textcolor{brown}{make precise}.  Then, $W_1 \cup W_2$ has non-vanishing Seiberg-Witten invariants for any gluing \textcolor{brown}{make precise, probably need gluing to have $b^+ \geq 2$ or $b^- \leq 9$}.  
%\end{lemma}

\begin{figure}
\begin{center}
\begin{overpic}[scale=.4, trim = 0in 2.7in 0in 1.9in, clip]{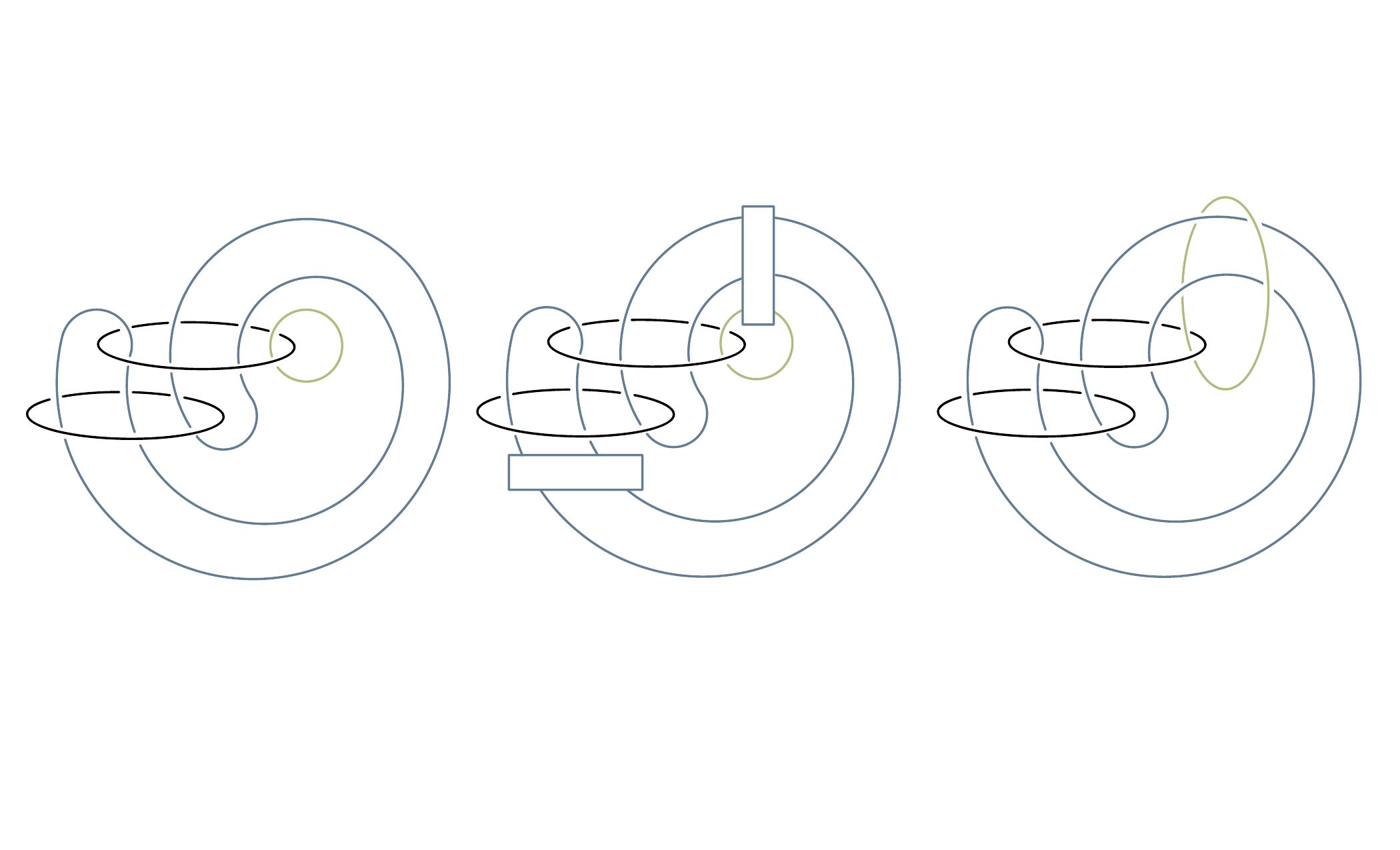}
%,grid,unit=5pt]
\put(25, 0){\color{ldblue}$0$}
\put(57, 0){\color{ldblue}$0$}
\put(90, 0){\color{ldblue}$0$}
\put(39, 6.9){\color{ldblue}$-1$}
\put(53.5, 22){\color{ldblue}$1$}
\put(22, 12){\color{lgreen}$(\mu,0)$}
\put(53, 12){\color{lgreen}$(\mu',1)$}
\put(86, 11){\color{lgreen}$(\mu',1)$}
\put(2, 9){$0$}
\put(34, 9){$0$}
\put(66.5, 9){$0$}
\put(10, 19){$0$}
\put(42, 19){$0$}
\put(75, 19){$0$}
%  \put(53, 14){\color{lred}$C$}
 % \put(60, 18){\color{lblue}$0$}
 %  \put(86, 15){\color{lgreen}$7$}
 %    \put(85, 10.5){\color{lgreen}$3$}
 %       \put(78, 10.5){\color{lgreen}$-2$}
    % \put(50, 70){$\color{lpurple}\gamma$}
    %     \put(141, 70){$\color{lpurple}\gamma$}
  \end{overpic}
  \end{center}
\caption{A self diffeomorphism of $S^3_0(Q)$ which changes the parity of the (homology class of the) meridian.
}\label{fig:spinswap}
\end{figure}

\begin{proof}[Proof of Theorem~\ref{thm:1+1}]
First, we claim that we can cut $Z$ along $S^3_0(Q)$ and reglue by an orientation-preserving diffeomorphism $g$ such that the result is non-spin.  This will be established later in the proof.  Call the result $Z'$.  Since $\Psi_{V, \mft}$ and $\Psi_{V, \overline{\mft}}$ are both non-zero, by Lemma~\ref{lem:gluing-independent} we still get that the invariants of $Z'$ would be non-vanishing for $span\{F\}$.  Furthermore, $Z'$ is now a cohomology $\cptwo \# \cptwobar$.  

We can see that $Z'$ is not diffeomorphic to $\cptwo \# \cptwobar$ or, more generally, $\cptwo \# \cptwobar \# D$ for any homology four-sphere $D$ as follows.  Both $\cptwo \# \cptwobar$ and $\cptwo \# \cptwobar \# D$ each admit two square-zero lines, each represented by a separating $S^2 \times S^1$. Since $HF_{red}(S^2 \times S^1) = 0$ in any non-torsion spin$^c$ structure, we have vanishing mixed invariant for any choice of line.
%vanishing mixed invariant for any choice of line.  Indeed, the two square-zero lines are each represented by a separating $S^2 \times S^1$, and $HF_{red}(S^2 \times S^1) = 0$.

    Now we construct the regluing homeomorphism $g$. To set up, note that since $V$ is a homology 0-trace, we know that the boundary of a generator $\gamma$ of $H_2(V,\partial V)$ is a generator of $H_1(\partial V)$. Note that the meridian $\mu$ in $S^3_0(Q)$ represents this boundary. We will show that there is a homeomorphism $f:\partial V\to \partial V$ with which sends the framed\footnote{Our integer framing convention is to forget the rest of the diagram and reference the Seifert framing.} meridian $(\mu,0)$ to the framed curve $(\mu',1)$ demonstrated in the right frame of Figure \ref{fig:spinswap}. There is a framed homology $A$ from $(\mu',0)$ to $(\mu,0)$ in $S^3_0(Q)$. We already saw that $V\cup_\sigma V$ is even; in particular if we take a surface $\Gamma$ in $V$ representing $\gamma$ then $\Gamma\cup_\mu\Gamma$ must be an even framed surface in $V\cup_\sigma V$. Therefore, $\Gamma\cup_A\Gamma$ in $V\cup_{f\circ \sigma}V$ is odd. The claim follows by taking $g=f\circ \sigma$. 

    It remains to demonstrate the homeomorphism $f$. We have done so in Figure \ref{fig:spinswap}. The first move is a Gluck twist on both black 0-framed unknots, and the second move is an isotopy. 
    %The homology $A$ comes from adding a tube to the disk bounded by $\mu'$ in the diagram, where the tube follows along the blue strand. 
\end{proof}

\begin{proof}[Proof of Theorem~\ref{thm:1+5}]
We construct an exotic $\cptwo \#_5 \cptwobar$ as follows.  In \cite[Proposition 6.9]{LLP}, we constructed a homotopy $X_0(Q) \#_4 \cptwobar$, which we will denote as $D$, equipped with spin$^c$ structures $\mathfrak{u}_\pm$ such that $\Psi_{D,\mathfrak{u}_\pm}$ are non-zero in $HF_{red}(S^3_0(Q), \mfs_\pm)$.  
  Let $A = V \cup D$ for any some choice of gluing.  Because $\pi_1(D) = 0$ and $F$ normally generates $\pi_1(V)$ by Lemma \ref{lem:Valgtop}, we get that $\pi_1(A) = 0$.  Since $b_2(A)=6$ and $\sigma(A)=-4$, $A$ is homeomorphic to $\cptwo \#_5 \cptwobar$ by Freedman \cite{Freedman}.  By Lemma~\ref{lem:gluing-independent}, we get that there are $\mft\in Spin^c(A)$ such that $\Phi_{A, span\{F\},\mft}$ are non-vanishing.  Since the Ozsv\'ath-Szab\'o mixed invariants can depend on the choice of line $L$, it is not a priori clear that this implies that $A$ is exotic; we need an additional argument.

We will make use of Wall's theorem that for $\cptwo\#5\cptwobar=S^2\times S^2\#3\cptwobar$, every automophism of $H_2$ can be realized by a diffeomorphism \cite{Wall}. To set up, note that the intersection form of $A$ is congruent to $\begin{pmatrix} 0 & 1 \\ 1 & n \end{pmatrix} \oplus 4 \langle -1 \rangle$; write an ordered basis for which this is the form as $E_1, \ldots, E_6$.  Notice also that in $\cptwo \#_5 \cptwobar$, there is a basis $E'_1, \ldots, E'_6$ for $H_2$ with exactly the same intersection form, and such that every generator $E'_i$ is represented by a sphere.  Now, if $A$ was diffeomorphic to $\cptwo \#_5 \cptwobar$, Wall's theorem would tell us that there would be a diffeomorphism sending the $E_i$ classes to the $E'_i$ classes.  By pulling back those spheres along the diffeomorphism, the fiber class $E_1=[F]\in H_2(A)$ would be represented by a square zero sphere.  But since $HF_{red}(S^2 \times S^1) = 0$, this would force $\Phi_{A, span\{F\},\mft} =0$ for all suitable spin$^c$ structures $\mft$. We have already shown this is not true. 
\end{proof}

\bibliographystyle{alpha}
\bibliography{references}

\end{document}